\newtheorem{definition}{Definition}
\newtheorem{assumption}{Assumption}
\newtheorem{theorem}{Theorem}
\newtheorem{lemma}{Lemma}
\begin{document}
%
% paper title
% Titles are generally capitalized except for words such as a, an, and, as,
% at, but, by, for, in, nor, of, on, or, the, to and up, which are usually
% not capitalized unless they are the first or last word of the title.
% Linebreaks \\ can be used within to get better formatting as desired.
% Do not put math or special symbols in the title.
\title{On Decentralized Tracking with ADMM for Problems with Time-Varying Curvature}
%
%
% author names and IEEE memberships
% note positions of commas and nonbreaking spaces ( ~ ) LaTeX will not break
% a structure at a ~ so this keeps an author's name from being broken across
% two lines.
% use \thanks{} to gain access to the first footnote area
% a separate \thanks must be used for each paragraph as LaTeX2e's \thanks
% was not built to handle multiple paragraphs
%

\author{Marie~Maros,~\IEEEmembership{Student Member,~IEEE,}
        and~Joakim~Jald\'{e}n,~\IEEEmembership{Senior Member,~IEEE}%
        }
\maketitle

% As a general rule, do not put math, special symbols or citations
% in the abstract or keywords.

\begin{abstract}
We analyze the performance of the alternating direction method of multipliers (ADMM) to track, in a decentralized manner, a solution of a stochastic sequence of optimization problems parametrized by a discrete time Markov process. The main advantage of considering a stochastic model is that we allow the objective functions to occasionally lose strong convexity and/or Lipschitz continuity of their gradients. Due to the stochastic nature of our model, the tracking statement is given in a mean square deviation sense.  
\end{abstract}
\IEEEpeerreviewmaketitle
\section{Introduction}
We consider the alternating direction method of multipliers (ADMM) applied to distributed and dynamic optimization problems of the form
\begin{equation}
\label{eq:standard_optimization}
\begin{aligned}
& \underset{\mathbf{x} \in \mathbb{R}^{p}}{\text{min}} & \sum_{i=1}^{n_g} f_i(\mathbf{x},\boldsymbol{\theta}_k) \, .
\end{aligned}
\end{equation}
Each function $f_i$ is assumed convex in $\mathbf{x}$, and privately known to node $i$ in an $n_g$-node network. %By introducing local copies of the optimization variable $\tilde{\mathbf{x}}$, and enforcing consensus among the local copies, 
When the problem is static, i.e., when $\theta_k = \theta$, ADMM can be used as a basis for iterative distributed solution to \eqref{eq:standard_optimization} that only requires local knowledge of $f_i$ and communication between neighboring nodes. The distributed nature remains in the dynamic setting when \eqref{eq:standard_optimization} is parameterized by a sequence $\{ \boldsymbol{\theta}_k \}_{k \geq 0}$ that characterizes the change of the objective functions $f_i(\cdot,\boldsymbol{\theta}_k)$ over time $k$,
%, and the objective function will typically have changed before an iterative algorithm has converged.
but the optimal objective value $\mathbf{x}^\star(\boldsymbol{\theta}_k)$ of \eqref{eq:standard_optimization} itself becomes time dependent through $\boldsymbol{\theta}_k.$ Problems of this form frequently arise in wireless sensor networks \cite{MAP} and  other engineering applications \cite{targtrack, admmtrack, powergrids}.

%If $\theta_k = \theta$ is assume constant, ADMM is well know to converge to the global optimum under fairly mild conditions [Boyd].
Iterative optimization methods that are allowed only one iteration per time $k$, have previously been studied in both centralized and distributed settings \cite{MAP, nconvex,predcorr,gradtrack,admmtrack}. An algorithm is considered to be capable of \emph{tracking} the optimal solution if the iterates remain in a neighborhood of the solution as time tends to infinity. Most first and second order algorithms enjoy linear or faster convergence rates in the static case, given that the objective function has nontrivial upper and lower bounds on the curvature. However, distributed implementations generally require bounds on the curvature of each $f_i.$ By imposing further restrictions such as that these bounds hold uniformly over $k$, and imposing uniform bounds on the maximum change between two consecutive optimal points, i.e. $\| \mathbf{x}^{\star}(\boldsymbol{\theta}_k) - \mathbf{x}^{\star}(\boldsymbol{\theta}_{k+1}) \| \leq b,$
\cite{predcorr,gradtrack,admmtrack} extended such static convergence results to obtain explicit tracking results in the dynamic setting. To the authors' knowledge, only \cite{nconvex} deals with  non-strongly convex objective functions in the dynamic setting. However, to deal with the loss of curvature, \cite{nconvex} added quadratic perturbations to compensate for the missing curvature. 

There are interesting cases where uniform bounds on the curvature become restrictive. Consider for example the distributed least squares problem 
\begin{equation}
\label{eq:quadratic}
\begin{aligned}
& \underset{\mathbf{x} \in \mathbb{R}^p}{\text{min}} & \sum_{i=1}^{n_g} \frac{1}{2} \|\mathbf{H}_i^{(k)}\mathbf{x} - \mathbf{y}_i^{(k)} \|_2^2,
\end{aligned}
\end{equation}
where $\theta_k = (\mathbf{H}_1^{(k)},\ldots,\mathbf{H}_{n_g}^{(k)}, \mathbf{y}_1^{(k)}, \ldots, \mathbf{y}_{n_g}^{(k)})$. If the sequence $\{ \theta_k \}_{k \geq 0}$ allows for the case where either of $\mathbf{H}_1^{(k)}, \ldots,\mathbf{H}_n^{(k)}$ become rank deficient, as it would if the $\mathbf{H}_i^{k}$ were drawn from, for example, a non-degenerate Gaussian autoregressive (AR) process, there does not exist any nontrivial bounds on curvature that hold uniformly. To further exacerbate the problem, in the Gaussian AR example, there are no uniform bounds on $\| \mathbf{x}^{\star}(\theta_k) - \mathbf{x}^{\star}(\theta_{k+1}) \|$. Nonetheless, if there are bounds that hold most of the time, one may still expect the algorithms to track in a stochastic sense.

We address these cases by explicitly modeling $\{ \boldsymbol{\theta}_k \}_{k \geq 0}$ as the realization of a stochastic process $\{\boldsymbol{\Theta}_k \}_{k \geq 0}$. Consequently, we pose the tracking statement in a mean square sense, rather than as deterministic bounds. Given the time varying nature of the curvature our algorithm of choice is ADMM. It is clear from \cite{nonergodic} that ADMM converges to the optimal solution(s) with no restrictions on the step-size, $\rho > 0,$ as opposed to other iterative methods. Algorithms such as gradient descent will require the step size to be bounded by a quantity related to the Lipschitz continuity constant of the gradient. As we will allow the Lipschitz continuity constant and the strong convexity constant to become arbitrarily bad we are interested in using a method that is robust to step-size selection.
 Further, ADMM's linear convergence, and thus, tracking ability under the assumptions stated in \cite{admmtrack} holds regardless of choice of step-size. Unlike the work in \cite{stochastic} we do not require deterministic bounds on the curvature of the objective functions nor on the gradients' norms. Further \cite{stochastic} only proposes a centralized solution.

 An important aspect to consider is the notion of sequence memory. In \cite{gradtrack,admmtrack,predcorr} this is implicitly accomplished through the bound on $\| \mathbf{x}^{\star}(\theta_k) - \mathbf{x}^{\star}(\boldsymbol{\theta}_{k+1}) \|$, and through similar bounds on the variation of gradients. Herein, we make this notion explicit by modeling $\{\boldsymbol{\Theta}_k \}_{k \geq 0}$ as a first order Markov sequence. The first order assumption is not particularly restrictive, as we allow the state space $\mathcal{S}$, where $\boldsymbol{\Theta}_k \in \mathcal{S}$, to be abstract.
 
The outline of the paper is as follows. Section \ref{sec:problemformulation} precisely formulates the problem and introduces the necessary notation to state the main result and the assumptions under which it holds. In Section \ref{section:main} we state the paper's main contribution in the form of a theorem (Theorem \ref{theorem:main_theorem}) that bounds the mean square deviation to an optimal point. In Section \ref{section:proof} the main statements required to prove Theorem \ref{theorem:main_theorem} are given. However, the their proof is not included in this paper due to space restrictions, and is left for the extended version of the paper which can be found on ArXiV. Finally, we present concluding remarks in Section \ref{section:conclusions}.

% For peer review papers, you can put extra information on the cover
% page as needed:
% \ifCLASSOPTIONpeerreview
% \begin{center} \bfseries EDICS Category: 3-BBND \end{center}
% \fi
%
% For peerreview papers, this IEEEtran command inserts a page break and
% creates the second title. It will be ignored for other modes.
\IEEEpeerreviewmaketitle

\section{Problem Formulation \label{sec:problemformulation}}
Our goal is to establish tracking guarantees for the time-varying optimization problem
\begin{subequations}
\label{eq:problem}
\begin{align}
&\underset{\mathbf{x} \in \mathbb{R}^{n p},\mathbf{z}^{m p}}{\min} \quad f(\mathbf{x},\boldsymbol{\Theta}_k) = \sum_{i=1}^{n} f_i(\mathbf{x}_i,\boldsymbol{\Theta}_{k}) \\
& \text{s.t.} \quad \mathbf{Ax} + \mathbf{Bz} = 0,\label{eq:consensus}
\end{align}
\end{subequations}
where the objective function $f$ is parametrized by the stochastic process $\{\boldsymbol{\Theta}_k\}_{k \geq 0}$ and the variables $\mathbf{x}_i \in \mathbb{R}^{p}$ denote the local copies of the primal variable belonging to each node $i.$
Further, the matrices $\mathbf{A} \triangleq [\mathbf{A}_s,\mathbf{A}_d]$ where $\mathbf{A}_s$ and $\mathbf{A}_d$ denote the block arc source and block arc destination matrices as defined in \cite{qlinear} and $\mathbf{B} \triangleq [-\mathbf{I}_{m p},-\mathbf{I}_{m p}],$ where $m$ is the number of directed edges in the network. The vector $\mathbf{z}$ can be partitioned in $m$ $p-$length sub-vectors $\mathbf{z}_{ij}$ each of them assigned to an edge in the graph. Then, the constraint \eqref{eq:consensus} enforces that the variables $\mathbf{x}_i,$ and $\mathbf{x}_j$ attain the same value as $\mathbf{z}_{ij},$ consequently enforcing consensus in a undirected and connected graph. 

We are interested in ADMM's tracking ability in the mean square deviation sense given that the stochastic process $\{\boldsymbol{\Theta}_k\}_{k \geq 0}$  is a Markov Process defined on a general state-space. We will work under the assumption that we can perform a full ADMM iteration at each change of $\boldsymbol{\Theta}_k.$ Note that the results can be easily extended if one allows for $K$ iterations from $k$ to $k+1.$ As we are considering the same time scale for process and algorithm, we will index iterates and process outcomes with the same time index $k \geq 0.$

ADMM solves optimization problems with equality constraints by sequentially updating the primal variables $\mathbf{x}$ and $\mathbf{z}$ by alternatively minimizing the augmented Lagrangian
\begin{align}
&\mathcal{L}(\mathbf{x},\mathbf{z},\boldsymbol{\lambda},\boldsymbol{\theta}_k) \triangleq f(\mathbf{x},\boldsymbol{\theta}_k) + \\
& \quad \quad \boldsymbol{\lambda}^T(\mathbf{Ax} + \mathbf{Bz}) + \frac{\rho}{2}\|\mathbf{Ax}+\mathbf{Bz}\|^2. \nonumber
\end{align}
Here $\boldsymbol{\lambda}$ denotes the dual multiplier associated to the equality constraint \eqref{eq:consensus} and $\rho > 0$ ADMM's step-size. After the primal updates are completed, a dual step in the positive direction of the gradient is taken. For completeness, ADMM is summarized in Algorithm \ref{alg:ADMM}. 
\begin{figure}
\begin{algorithm}[H]
 \caption{ADMM}\label{alg:ADMM}
 \begin{algorithmic}[1]
  \State Initialize $ \mathbf{z}^{[0]}$ and $\boldsymbol{\lambda}^{[0]} $ as indicated by Assumption \ref{assumption:second}. Set $k=1$.
  \State Each node $i$ observes $f_i(\cdot,\theta_k)$
  \State $\mathbf{x}$ update:
  \begin{equation} \label{eq:xupdate}
  \mathbf{x}^{(k)} = \underset{\mathbf{x}}{\text{arg min}} \quad \mathcal{L}(\mathbf{x},\mathbf{z}^{(k-1)},\boldsymbol{\lambda}^{(k-1)},\theta_k)
	\end{equation}
	\State Agents exchange $\mathbf{x}_i^{(k)}$ with immediate neighbours.
  \State  $\mathbf{z}$ update:
  \begin{equation} \label{eq:zupdate}
  \mathbf{z}^{(k)} = \underset{\mathbf{z}}{\text{arg min}} \quad \mathcal{L}(\mathbf{x}^{(k)},\mathbf{z},\boldsymbol{\lambda}^{(k-1)}, \theta_k)
  \end{equation}
  \State $\boldsymbol{\lambda}$ update:
  \begin{equation} \label{eq:lupdate}
  \boldsymbol{\lambda}^{(k)} = \boldsymbol{\lambda^{(k-1)}} + \rho (\mathbf{A}\mathbf{x}^{(k)} + \mathbf{B}\mathbf{z}^{(k)})
  \end{equation} 
  \State Set $ k \leftarrow k + 1$, and return to 2.
 \end{algorithmic}
\end{algorithm}
\end{figure}
Steps 1-7 admit distributed implementations requiring only the exchange of the updates $\mathbf{x}_{i}^{(k)}$ of each node $i$ with their immediate neighbours.

The choice of the initial value of $\boldsymbol{\lambda}^{(0)}$ is not arbitrary. In particular, when the problem is parametrized by a stochastic variable, the second order moment of the minimizer need not be bounded. Therefore, a warm start is required to keep the initial second order quantities bounded. This notion is formalized in Assumption \ref{assumption:second}. With  problems of the structure of \eqref{eq:problem} and using the initialization scheme in Assumption \ref{assumption:second} we have that $\boldsymbol{\lambda}^{(k)} = [\boldsymbol{\alpha}^{(k)T},-\boldsymbol{\alpha}^{(k)T}]$ holds for all $k$ as well as at optimality. This symmetry will allow us to make statements in terms of $\boldsymbol{\alpha}^{(k)}$ and $\boldsymbol{\alpha}^{\star}(\boldsymbol{\Theta}_k)$ instead of $\boldsymbol{\lambda}^{(k)}$ and $\boldsymbol{\lambda}^{\star}(\boldsymbol{\Theta}_k),$ where $\boldsymbol{\lambda}^{\star}(\boldsymbol{\Theta}_k)$ denotes the optimal dual multiplier associated to the constraint \eqref{eq:consensus} with $\boldsymbol{\lambda}^{\star}(\boldsymbol{\Theta}_k) = [\boldsymbol{\alpha}^{\star T}(\boldsymbol{\Theta}_k),-\boldsymbol{\alpha}^{\star T}(\boldsymbol{\Theta}_k)].$ Note that there may be more than one optimal dual multiplier associated to \eqref{eq:consensus} at any given time, but given that the objective functions are differentiable, the  multiplier $\boldsymbol{\alpha}^{\star}(\boldsymbol{\Theta}_k)$ is unique. Whenever the objective function is not strongly convex the primal optimal points $\mathbf{x}^{\star}(\boldsymbol{\Theta}_k)$ and $\mathbf{z}^{\star}(\boldsymbol{\Theta}_k)$ will not necessarily be unique. We will argue in Appendix \ref{appendix:lemma1}, which can be found in the extended version of this paper on ArXiv, that this does not cause problems.  Finally, in the absence of strong convexity and or smoothness ADMM has been established to converge at rate $\mathcal{O}(\frac{1}{k})$ \cite{nonsmooth}. However, this convergence statement is made in terms of $\mathbf{z}$ and $\boldsymbol{\lambda}.$ Consequently our statements will be made in terms of $\mathbf{z}$ and $\boldsymbol{\lambda}$ as well.

We now introduce the assumptions under which our main statement holds.

\begin{assumption}
\label{assumption:geometrically_ergodic}
Let $\{\boldsymbol{\Theta}_k\}_{k \geq 0}$ be a time homogeneous Markov process evolving over a general state space $\mathcal{S}.$ Further, let $P^s(\boldsymbol{\theta},A)$ denote the $s-$step transition probability from $\boldsymbol{\theta}$ to $A \subseteq \mathcal{B}(\mathcal{S}).$ The process $\{\boldsymbol{\Theta}_k\}_{ k \geq 0}$ is $\phi-$irreducible and aperiodic. Also, there exists a small set $\mathcal{C},$ constants $b < \infty,$ $\beta > 0$ and a function $V \geq 1$ finite at some $\boldsymbol{\theta} \in \mathcal{S}$ satisfying
\begin{equation}
\Delta V(\boldsymbol{\theta}) \leq - \beta V(\boldsymbol{\theta}) + b \mathbbm{1}_{\mathcal{C}}(\boldsymbol{\theta}).
\end{equation} 
Further, the chain is initialized such that $\boldsymbol{\Theta}_0 \sim \pi$ where $\pi$ is the unique stationary distribution of $\{\boldsymbol{\Theta}_k\}_{k \geq 0}.$
\end{assumption}
For the same of completeness we formalize here the definition of small set. The remaining notions can be found in \cite{booktweedie}.
\begin{definition}[Small set]\label{definition:smallset} A set $C$ is $(\nu,s)-$small there exits $\beta >0,$ and integer $s \geq 1$ and a measure $\nu$ such that
\begin{equation}
P^s(x,B) \geq \beta \nu(B),\,\forall B \in \mathcal{B}(\mathcal{S}),\,\forall \mathbf{x} \in \mathcal{C}. \label{eq:majorization}
\end{equation}
\end{definition}
Assumption \ref{assumption:geometrically_ergodic} guarantees, among other things, that the process converges to its unique stationary distribution $\pi$ geometrically fast. Further, it provides guarantees regarding recurrence to the set $\mathcal{C}.$
\begin{assumption}
\label{assumption:wellconditioned}
There exists a small set $\mathcal{C}$ such that objective functions $f(\cdot,\boldsymbol{\theta})$ are at least $\mu_{\mathcal{C}}-$strongly convex and have at most $L_{\mathcal{C}}-$Lipschitz continuous gradients in $\mathbf{x}$ for all $\boldsymbol{\theta} \in \mathcal{C}.$ This implies that within $\mathcal{C}$ we have that the objective function's condition number
\begin{equation}
\kappa_{\mathcal{C}} \triangleq \frac{L_{\mathcal{C}}}{\mu_{\mathcal{C}}} < K < \infty.
\end{equation}
Further, the objective functions $f(\cdot,\boldsymbol{\theta})$ are convex and differentiable for all $\boldsymbol{\theta} \in \mathcal{S}.$ Further, let $\mu(\boldsymbol{\theta})$ denote the strong convexity constant associated to $f(\cdot,\boldsymbol{\theta})$ and $L(\boldsymbol{\theta})$ the Lipschitz continuity constant of its gradient.
\end{assumption}
\begin{assumption}
\label{assumption:network}
The nodes are connected via an undirected connected graph.
\end{assumption}
Assumption \ref{assumption:network} is standard in the context of distributed optimization and guarantees that the graph's Laplacian matrix has a single zero eigenvalue. We will denote it's largest eigenvalue $\Gamma_L$ and its second largest eigenvalue $\gamma_L > 0.$
\begin{assumption}[Bounded variations]
\label{assumption:bounded variations}
The fourth order moments of the primal dual variations are bounded quantities, i.e. there exits finite constants $B_{\boldsymbol{\lambda}}^4$ and $B_{\mathbf{x}}^4$ such that
\begin{align}
& \mathbb{E}[\|\mathbf{x}^{\star}(\boldsymbol{\Theta}_i)-\mathbf{x}^{\star}(\boldsymbol{\Theta}_{i-1})\|^4 ] \leq B_{\mathbf{x}}^4 \\
& \mathbb{E}[\|\boldsymbol{\lambda}^{\star}(\boldsymbol{\Theta}_i)-\boldsymbol{\lambda}^{\star}(\boldsymbol{\Theta}_{i-1})\|^4] \leq  B_{\boldsymbol{\lambda}}^4.
\end{align}
\end{assumption}
\begin{assumption}[Warm start]
\label{assumption:second}
The optimization problem parametrized at time $0,$ can be solved to a desired level of $\epsilon_0$ accuracy. To do this, set $\boldsymbol{\lambda}^{[-1]} = \mathbf{0}$ and $\mathbf{z}^{[-1]} = \mathbf{0}.$ Perform ADMM iterates until desired level of accuracy $\epsilon_0.$ Then, quantities that fulfil $\epsilon_0$ accuracy are denoted $\mathbf{z}(0)$ and $\boldsymbol{\lambda}(0)$. More precisely we require that
\begin{equation}
\|\mathbf{u}(0)-\mathbf{u}^{\star}(\boldsymbol{\Theta}_{0}^{\star})\|_{\mathbf{G}} \leq \epsilon_0
\end{equation}
where $\mathbf{u}(k) \triangleq [\mathbf{z}^{T}(k),\boldsymbol{\alpha}^{T}(k)]^T$ and $\mathbf{G} \triangleq \text{diag}(\rho \mathbf{I},\frac{1}{\rho}\mathbf{I}).$
\end{assumption}
The assumption above allows us to have an initial bound on the distance to the minimizer. This assumption is only required if for the particular problem-process pair the second order moment of the solution is unbounded. Further, note that by starting off with $\boldsymbol{\lambda}(-1)$ and $\boldsymbol{z}(-1)$ set to $\mathbf{0}$ $\boldsymbol{\lambda}(0)$ and $\boldsymbol{z}(0)$ can be found in a distributed manner.
\section{Main Result and proof \label{section:main}}
In this section we formalize the paper's main result followed by the main ingredients required to establish its veracity. Due to space restrictions, the proofs can be found in the extended version of the paper which can be found on ArXiV.
\begin{theorem}
\label{theorem:main_theorem}
Given the optimization problem \eqref{eq:consensus} and under Assumptions \ref{assumption:geometrically_ergodic}-\ref{assumption:second} ADMM with step-sizes $\rho > 0$ provides with distributed iterates such that
\begin{align}
&\underset{k \to \infty}{\lim} \, \sup \quad \mathbb{E}[\|\mathbf{u}(k) - \mathbf{u}^{\star}(\boldsymbol{\Theta}_k)\|^2] \leq \\
& \quad \frac{2C}{(1-\gamma^{1/2})^2}\sqrt{B_1(B_{\mathbf{x}},B_{\boldsymbol{\lambda}})},
\end{align}
where $C < \infty$ and $\gamma < 1$ are constants depending on the parameters and probability densities in \eqref{eq:majorization} and $B_1(B_{\mathbf{x}},B_{\boldsymbol{\lambda}})$ is a polynomial in $B_{\mathbf{x}}$ and $B_{\boldsymbol{\lambda}}$ which can be written as
\begin{align*}
&\left( \frac{\rho m_g}{n_g}\right)^2B_{\mathbf{x}}^4 +  4 \left(\frac{\rho m_g^{3/2}}{n_g^{3/2}\sqrt{2\gamma_L}}\right)(B_{\mathbf{x}}^3B_{\boldsymbol{\lambda}}) + \\
& 4 \left(\frac{\sqrt{\rho m_g}}{\sqrt{n_g}(2\rho \gamma_L)^{3/2}} \right)(B_{\boldsymbol{\lambda}}^3B_{\mathbf{x}}) + \\
& 6 \left( \frac{m_g}{n_g\gamma_L}\right)B_{\mathbf{x}}^2B_{\boldsymbol{\lambda}}^2 + \left(\frac{1}{4\rho^2\gamma_L^2}\right)B_{\boldsymbol{\lambda}}^4.
\end{align*}
\end{theorem}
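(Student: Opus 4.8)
The plan is to reduce the tracking problem to a scalar error recursion driven by a one-step ADMM contraction, unroll that recursion into a weighted sum of the optimizer's temporal variations, and then use the geometric ergodicity of $\{\boldsymbol{\Theta}_k\}$ from Assumption \ref{assumption:geometrically_ergodic} to control the resulting second moment. First I would establish a per-iteration inequality of the form
\[
e(k) \triangleq \|\mathbf{u}(k) - \mathbf{u}^{\star}(\boldsymbol{\Theta}_k)\|_{\mathbf{G}} \leq \delta(\boldsymbol{\Theta}_k)\big(e(k-1) + d(k)\big),
\]
where $d(k) \triangleq \|\mathbf{u}^{\star}(\boldsymbol{\Theta}_{k-1}) - \mathbf{u}^{\star}(\boldsymbol{\Theta}_k)\|_{\mathbf{G}}$ is the displacement of the optimizer between consecutive parameters. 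The decisive structural fact is that nonexpansiveness of the ADMM operator in the $\mathbf{G}$-norm gives $\delta(\boldsymbol{\Theta}_k) \leq 1$ for every $\boldsymbol{\theta} \in \mathcal{S}$ using only convexity from Assumption \ref{assumption:wellconditioned}, whereas on the small set the $\mu_{\mathcal{C}}$-strong convexity and $L_{\mathcal{C}}$-Lipschitz gradients force a strict contraction $\delta(\boldsymbol{\Theta}_k) = \delta_{\mathcal{C}} < 1$ whenever $\boldsymbol{\Theta}_k \in \mathcal{C}$, with $\delta_{\mathcal{C}}$ a function of $\kappa_{\mathcal{C}}$ and $\rho$ only. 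The displacement is split off by applying the triangle inequality to detach $\mathbf{u}^{\star}(\boldsymbol{\Theta}_k)$ from $\mathbf{u}^{\star}(\boldsymbol{\Theta}_{k-1})$ before invoking the contraction with fixed point $\mathbf{u}^{\star}(\boldsymbol{\Theta}_k)$.

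Unrolling the recursion yields
\[
e(k) \leq \Big(\textstyle\prod_{j=1}^{k}\delta(\boldsymbol{\Theta}_j)\Big)e(0) + \sum_{i=1}^{k}\Big(\textstyle\prod_{j=i}^{k}\delta(\boldsymbol{\Theta}_j)\Big)d(i),
\]
and introducing the occupation count $N_i^k \triangleq \sum_{j=i}^{k}\mathbbm{1}_{\mathcal{C}}(\boldsymbol{\Theta}_j)$ collapses each product to $\delta_{\mathcal{C}}^{N_i^k}$. The warm start of Assumption \ref{assumption:second} bounds $e(0) \leq \epsilon_0$, so its contribution is damped by $\delta_{\mathcal{C}}^{N_0^k}$ and vanishes in the $\limsup$. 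I would then pass to the $L^2$ norm, use Minkowski's inequality to move it inside the sum over $i$, and apply Cauchy--Schwarz termwise to decouple the random contraction weight from the displacement,
\[
\mathbb{E}\big[\delta_{\mathcal{C}}^{2N_i^k}d(i)^2\big] \leq \big(\mathbb{E}\big[\delta_{\mathcal{C}}^{4N_i^k}\big]\big)^{1/2}\big(\mathbb{E}[d(i)^4]\big)^{1/2}.
\]
This decoupling is exactly why Assumption \ref{assumption:bounded variations} postulates \emph{fourth}-order variation moments, and why the final bound carries a square root of a degree-four polynomial.

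Two ingredients then close the argument. The displacement moment $\mathbb{E}[d(i)^4]$ must be rewritten in the quantities of Assumption \ref{assumption:bounded variations}: since $\|\cdot\|_{\mathbf{G}}^2 = \rho\|\cdot\|^2 + \rho^{-1}\|\cdot\|^2$ on the $(\mathbf{z},\boldsymbol{\alpha})$ blocks, expanding the square of the $\mathbf{G}$-norm and bounding the variations of $\mathbf{z}^{\star}$ and $\boldsymbol{\alpha}^{\star}$ by those of $\mathbf{x}^{\star}$ and $\boldsymbol{\lambda}^{\star}$ through the incidence structure introduces the factors $m_g/n_g$ and $\gamma_L$ and reproduces precisely $B_1(B_{\mathbf{x}},B_{\boldsymbol{\lambda}})$, its mixed terms arising from the cross products in that expansion. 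The second ingredient is the geometric bound $\mathbb{E}[\delta_{\mathcal{C}}^{4N_i^k}] \leq C^2\gamma^{2(k-i)}$ for constants $C<\infty$, $\gamma<1$: here Assumption \ref{assumption:geometrically_ergodic} enters, as the Foster--Lyapunov drift together with the $(\nu,s)$-minorization in \eqref{eq:majorization} force the chain to accumulate visits to $\mathcal{C}$ at a positive rate, so the expected geometric discount of $N_i^k$ decays geometrically in the elapsed time. Substituting both bounds, summing $\sum_{j\geq 0}\gamma^{j/2} = (1-\gamma^{1/2})^{-1}$, and squaring the resulting $L^2$ estimate gives the claimed bound $\tfrac{2C}{(1-\gamma^{1/2})^2}\sqrt{B_1(B_{\mathbf{x}},B_{\boldsymbol{\lambda}})}$, with the constant factor absorbing the vanishing initial-condition term.

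The main obstacle I anticipate is the geometric control of $\mathbb{E}[\delta_{\mathcal{C}}^{qN_i^k}]$. The occupation counts $N_i^k$ and the displacements $d(i)$ are both functionals of the single trajectory of $\{\boldsymbol{\Theta}_k\}$ and hence statistically dependent; the Cauchy--Schwarz step handles this dependence at the price of fourth moments, but one must still convert the abstract drift condition of Assumption \ref{assumption:geometrically_ergodic} into a quantitative, geometrically decaying bound on the discounted occupation of $\mathcal{C}$ that is \emph{uniform over the starting index} $i$. Establishing that uniformity, rather than the algebra of the per-step inequality, is the delicate part of the analysis.
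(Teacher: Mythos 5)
Your proof plan has the same skeleton as the paper's argument: a one-step contraction-plus-perturbation inequality (the paper's Lemma \ref{lemma:detcontraction}, whose contraction factor $1/\sqrt{1+\delta(\boldsymbol{\theta})}$ equals one when only convexity is available and is strictly less than one on $\mathcal{C}$), unrolling of the recursion, Cauchy--Schwarz to decouple the random contraction product from the optimizer displacement (you correctly identify this as the reason Assumption \ref{assumption:bounded variations} posits \emph{fourth} moments), and finally a geometric bound on the expected contraction product. Your substitution of Minkowski's inequality in $L^2$ for the paper's Peter--Paul inequality with optimized parameter $\upsilon$ is an immaterial algebraic variation that reproduces the same $(1-\gamma^{1/2})^{-2}\sqrt{B_1}$ shape of the final bound.

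The genuine gap is in the step you yourself flag as delicate: the claim $\mathbb{E}\bigl[\delta_{\mathcal{C}}^{4N_i^k}\bigr] \leq C^2\gamma^{2(k-i)}$. Your justification---that the drift and minorization ``force the chain to accumulate visits to $\mathcal{C}$ at a positive rate, so the expected geometric discount of $N_i^k$ decays geometrically''---does not go through as stated, and the obstruction is not only uniformity in $i$. A positive visit rate, even almost surely, is compatible with $\mathbb{E}[\gamma^{N_i^k}]$ decaying subexponentially; what is needed is exponential control of the lower tail $P\bigl(N_i^k \leq \alpha(k-i)\bigr)$, equivalently of the upper tails of sums of successive return times to $\mathcal{C}$. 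Here the general-state-space setting bites: since $\mathcal{C}$ is not an atom, the excursion lengths between successive visits to $\mathcal{C}$ are neither independent nor identically distributed, so no renewal or Chernoff argument applies to them directly. This is precisely what the paper's Theorem \ref{theorem:contraction} and its proof exist to repair: the chain is replaced by its Nummelin split chain, in which $\mathcal{C}_1 = \mathcal{C}\times\{1\}$ \emph{is} an atom---this is the actual role of the minorization \eqref{eq:majorization}, not merely to guarantee recurrence; returns to the atom are i.i.d.\ regeneration times with geometrically decaying tails (obtained from the drift condition); these times are stochastically dominated by i.i.d.\ delayed geometric random variables; and a Chernoff bound on the dominated renewal count yields the exponential decay, with stationary initialization supplying uniformity over the starting index $i$. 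Without introducing regeneration through splitting (or an equivalent coupling device), the central estimate of your plan cannot be established, and the rest of the argument has nothing to sum against.
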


\subsection{Proof of Theorem \ref{theorem:main_theorem} \label{section:proof}}

This section is devoted to establishing Theorem \ref{theorem:main_theorem}. For this we will use some supporting Lemmas. The Lemmas we introduce are proven in the appendices which can be found in the extended version of this paper.
\begin{lemma}
\label{lemma:detcontraction}
Under Assumptions \ref{assumption:wellconditioned} and \ref{assumption:network} it holds for $k \geq 1$
\begin{subequations}
\begin{align}
&\|\mathbf{u}(k) - \mathbf{u}^{\star}(\boldsymbol{\theta}_k)\|_{\mathbf{G}} \leq   \\
& \frac{\|\mathbf{u}(k) - \mathbf{u}^{\star}(\boldsymbol{\theta}_{k-1})\|_{\mathbf{G}}}{\sqrt{1+\delta(\boldsymbol{\theta}_k)}} + \frac{g(\boldsymbol{\theta}_k,\boldsymbol{\theta}_{k-1})}{\sqrt{1+\delta(\boldsymbol{\theta}_k)}}, \label{eq:detcontraction}
\end{align}
where
\begin{align}
& g(\boldsymbol{\theta}_k,\boldsymbol{\theta}_{k-1}) \triangleq \frac{\sqrt{\rho m_g}}{\sqrt{n_g}}\boldsymbol{\Delta}\mathbf{x}^{\star}(\boldsymbol{\theta}_k,\boldsymbol{\theta}_{k-1})\\
& \quad + \frac{1}{\sqrt{2\rho \gamma_L}}\boldsymbol{\Delta}\nabla_{\mathbf{x}}^{\star}f(\boldsymbol{\theta}_k,\boldsymbol{\theta}_{k-1})
\end{align}
\end{subequations}
\begin{subequations}
\begin{align}
&\boldsymbol{\Delta}\mathbf{x}^{\star}(\boldsymbol{\theta}_k,\boldsymbol{\theta}_{k-1}) \triangleq \|\mathbf{x}^{\star}(\boldsymbol{\theta}_k)-\mathbf{x}^{\star}(\boldsymbol{\theta}_{k-1})\| \\
&\boldsymbol{\Delta}\nabla_{\mathbf{x}}f^{\star}(\boldsymbol{\theta}_k,\boldsymbol{\theta}_{k-1}) \triangleq\\
&\,\, \|\nabla_{\mathbf{x}}f(\mathbf{x}^{\star}(\boldsymbol{\theta}_k),\boldsymbol{\theta}_k) - \nabla_{\mathbf{x}}f(\mathbf{x}^{\star}(\boldsymbol{\theta}_{k-1}),\boldsymbol{\theta}_{k-1})\|, \nonumber
\end{align}
\end{subequations}
\begin{equation}
\label{eq:delta}
\delta(\boldsymbol{\theta}) \triangleq \min \left\{ \frac{(\phi-1)\gamma_L}{\phi \Gamma_L}, \frac{2\rho \mu(\boldsymbol{\theta})\gamma_L}{\rho^2 \Gamma_L\gamma_L + \phi L(\boldsymbol{\theta})^2}\right\},
\end{equation}
whenever $\theta$ leads to a strongly convex 
with $\phi > 1$ an arbitrary constant.
\end{lemma}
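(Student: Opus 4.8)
The plan is to separate the effect of running one ADMM step against the \emph{current} objective $f(\cdot,\boldsymbol{\theta}_k)$ from the effect of the minimizer moving between times $k-1$ and $k$. The key observation is that the iterate $\mathbf{u}(k)$ is produced by the updates in Algorithm \ref{alg:ADMM} using $f(\cdot,\boldsymbol{\theta}_k)$, so it contracts toward the \emph{current} optimal tuple $\mathbf{u}^{\star}(\boldsymbol{\theta}_k)$, whereas the previous iterate $\mathbf{u}(k-1)$ is close to the previous optimum $\mathbf{u}^{\star}(\boldsymbol{\theta}_{k-1})$. First I would establish a one-step $\mathbf{G}$-norm contraction of the form $\|\mathbf{u}(k)-\mathbf{u}^{\star}(\boldsymbol{\theta}_k)\|_{\mathbf{G}} \le (1+\delta(\boldsymbol{\theta}_k))^{-1/2}\|\mathbf{u}(k-1)-\mathbf{u}^{\star}(\boldsymbol{\theta}_k)\|_{\mathbf{G}}$, then insert $\mathbf{u}^{\star}(\boldsymbol{\theta}_{k-1})$ by the triangle inequality, and finally bound the minimizer drift $\|\mathbf{u}^{\star}(\boldsymbol{\theta}_k)-\mathbf{u}^{\star}(\boldsymbol{\theta}_{k-1})\|_{\mathbf{G}}$ by $g(\boldsymbol{\theta}_k,\boldsymbol{\theta}_{k-1})$. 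Dividing the drift term by $\sqrt{1+\delta(\boldsymbol{\theta}_k)}$ and collecting yields exactly \eqref{eq:detcontraction}.

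For the contraction I would follow the q-linear convergence analysis of ADMM in \cite{qlinear}, applied at the frozen parameter $\boldsymbol{\theta}_k$ and stated per iteration rather than asymptotically. Writing the primal and dual updates through their optimality conditions and subtracting the fixed-point conditions satisfied by $\mathbf{u}^{\star}(\boldsymbol{\theta}_k)$, strong convexity with modulus $\mu(\boldsymbol{\theta}_k)$ supplies a quadratic lower bound while the $L(\boldsymbol{\theta}_k)$-Lipschitz gradient upper bounds the cross terms; the graph spectrum enters through the smallest and largest nonzero Laplacian eigenvalues $\gamma_L$ and $\Gamma_L$ arising from the arc matrices $\mathbf{A}_s,\mathbf{A}_d$. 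Carrying a free parameter $\phi>1$ through these estimates and collecting terms produces the factor $(1+\delta(\boldsymbol{\theta}_k))^{-1/2}$; the two entries of the $\min$ in \eqref{eq:delta} are precisely the two competing constraints, one purely spectral and one mixing curvature with connectivity. When $\boldsymbol{\theta}_k$ does not render $f(\cdot,\boldsymbol{\theta}_k)$ strongly convex one sets $\delta(\boldsymbol{\theta}_k)=0$ and falls back on the firm nonexpansiveness of the ADMM operator established in \cite{nonergodic,nonsmooth}, so the inequality still holds with contraction factor $1$.

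It remains to bound the drift, $\|\mathbf{u}^{\star}(\boldsymbol{\theta}_k)-\mathbf{u}^{\star}(\boldsymbol{\theta}_{k-1})\|_{\mathbf{G}}^2 = \rho\|\mathbf{z}^{\star}(\boldsymbol{\theta}_k)-\mathbf{z}^{\star}(\boldsymbol{\theta}_{k-1})\|^2 + \rho^{-1}\|\boldsymbol{\alpha}^{\star}(\boldsymbol{\theta}_k)-\boldsymbol{\alpha}^{\star}(\boldsymbol{\theta}_{k-1})\|^2$, and split it additively through $\sqrt{a+b}\le\sqrt{a}+\sqrt{b}$. For the $\mathbf{z}$-block I would use that at consensus every edge variable $\mathbf{z}^{\star}_{ij}$ equals the common node value, so $\mathbf{z}^{\star}$ stacks $m_g$ copies and $\mathbf{x}^{\star}$ stacks $n_g$ copies of that value, giving $\|\mathbf{z}^{\star}(\boldsymbol{\theta}_k)-\mathbf{z}^{\star}(\boldsymbol{\theta}_{k-1})\| = \sqrt{m_g/n_g}\,\boldsymbol{\Delta}\mathbf{x}^{\star}(\boldsymbol{\theta}_k,\boldsymbol{\theta}_{k-1})$ and, after the $\sqrt{\rho}$ weighting, the first term of $g$. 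For the $\boldsymbol{\alpha}$-block I would use the stationarity condition $\nabla_{\mathbf{x}} f(\mathbf{x}^{\star},\boldsymbol{\theta}) + (\mathbf{A}_s-\mathbf{A}_d)^T\boldsymbol{\alpha}^{\star} = \mathbf{0}$ obtained from the symmetry $\boldsymbol{\lambda}^{\star}=[\boldsymbol{\alpha}^{\star T},-\boldsymbol{\alpha}^{\star T}]^T$. Differencing over $k-1,k$ shows $\boldsymbol{\Delta}\nabla_{\mathbf{x}}f^{\star}$ lies in the range of $(\mathbf{A}_s-\mathbf{A}_d)^T$; inverting on the consensus-orthogonal subspace and bounding by the smallest nonzero eigenvalue $2\gamma_L$ of the Gram matrix $(\mathbf{A}_s-\mathbf{A}_d)^T(\mathbf{A}_s-\mathbf{A}_d)$ gives $\|\boldsymbol{\alpha}^{\star}(\boldsymbol{\theta}_k)-\boldsymbol{\alpha}^{\star}(\boldsymbol{\theta}_{k-1})\| \le (2\gamma_L)^{-1/2}\boldsymbol{\Delta}\nabla_{\mathbf{x}}f^{\star}(\boldsymbol{\theta}_k,\boldsymbol{\theta}_{k-1})$, which after the $\rho^{-1/2}$ weighting is the second term of $g$.

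The main obstacle is the explicit contraction step: keeping all curvature and spectral constants exact through the ADMM fixed-point manipulation so that the optimization over $\phi$ produces precisely the $\min$ in \eqref{eq:delta}, rather than merely an existence-of-$\gamma<1$ statement. A secondary subtlety is that without strong convexity $\mathbf{x}^{\star}(\boldsymbol{\theta})$ and $\mathbf{z}^{\star}(\boldsymbol{\theta})$ need not be unique; since only $\boldsymbol{\alpha}^{\star}(\boldsymbol{\theta})$ is unique, I would invoke the selection argument deferred to Appendix \ref{appendix:lemma1} to fix a consistent consensus minimizer so that $\boldsymbol{\Delta}\mathbf{x}^{\star}$ is well defined, and verify that $\boldsymbol{\alpha}^{\star}(\boldsymbol{\theta})$ lies in the range of $(\mathbf{A}_s-\mathbf{A}_d)$ so that the singular-value bound used for the dual block applies.
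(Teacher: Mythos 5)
Your proposal is correct and follows essentially the same route as the paper's own proof: a one-step $\mathbf{G}$-norm contraction toward the current optimum (the paper defers this to Theorem 1 of \cite{tracking}, you to the static analysis of \cite{qlinear}), a triangle inequality inserting the previous optimum, the consensus-structure identity for the $\mathbf{z}$-drift, the stationarity/incidence-matrix spectral bound for the $\boldsymbol{\alpha}$-drift, and the fallback to nonexpansiveness with $\delta(\boldsymbol{\theta}_k)=0$ when strong convexity fails. Your treatment of primal non-uniqueness via a consistent selection of minimizers (with the unique $\boldsymbol{\alpha}^{\star}$ and gradient value) is also exactly the paper's argument.
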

Proof of this lemma can be found in Appendix \ref{appendix:lemma1}. The essential difference with the proof provided in \cite{tracking} is that we must take into account the possible lack of strong convexity and Lipschitz continuity of the gradients and non-uniqueness of primal optimal points.

Note that the statement in Lemma \ref{lemma:detcontraction} assumes a deterministic quantity $\boldsymbol{\theta}_k.$ Note that if the bound holds for all deterministic quantities, it will also hold as an expectation is being taken on each side. 
For notational convenience, let $\sqrt{q}(\boldsymbol{\theta}) \triangleq 1/\sqrt{1+\delta(\boldsymbol{\theta})}$ and $L(k,\boldsymbol{\theta}_k) \triangleq \|\mathbf{u}^{(k)} - \mathbf{u}^{\star}(\boldsymbol{\theta}_k)\|.$ This allows us to rephrase \eqref{eq:detcontraction} as
\begin{align}
& L(k,\boldsymbol{\theta}_k) \leq \sqrt{q(\boldsymbol{\theta}_k)}L(k-1,\boldsymbol{\theta}_{k-1}) \\& + \sqrt{q(\boldsymbol{\theta}_k)}g(\theta_k,\theta_{k-1}).
\end{align}

Recall that we are interested in convergence in a mean square error sense and we will therefore to take squares on both sides and then the expectation. If we were to do that, we would eventually have to deal with the expected value of the cross product of the two terms in the RHS of \eqref{eq:detcontraction}. In order to avoid this we will take squares on both sides of \eqref{eq:detcontraction} but use the Peter-Paul\footnote{$(a+b)^2 \leq (1+\upsilon)a^2 + (1+\frac{1}{\upsilon})b^2,\,\forall \upsilon >0.$} inequality on the RHS. Then we obtain for any $\upsilon > 0$
\begin{align}
\label{eq:peterpaul}
L^2(k,\boldsymbol{\theta}_k) \leq (1+\upsilon)q(\boldsymbol{\theta}_k)L^2(k-1,\boldsymbol{\theta}_{k-1}) \\
+ \left(1+\frac{1}{\upsilon}\right)q(\boldsymbol{\theta}_k)g^2(\boldsymbol{\theta}_k,\boldsymbol{\theta}_{k-1}). \nonumber
\end{align}
For any deterministic sequence $\{\boldsymbol{\theta}_j\}_{k \geq 0}$ we can apply \eqref{eq:peterpaul} recursively obtaining
\begin{align}
&L^2(k,\boldsymbol{\theta}_k) \leq (1+\upsilon)^k\left( \Pi_{i=1}^k q(\boldsymbol{\theta}_i)\right)L^2(0,\boldsymbol{\theta}_0) + \nonumber\\
&\quad \left(1+\frac{1}{\upsilon}\right)g^2(\boldsymbol{\theta}_k,\boldsymbol{\theta}_{k-1})q(\boldsymbol{\theta}_k). \label{eq:iterative}
\end{align}
This will hold for any $k$ and for any deterministic sequence $\{\boldsymbol{\theta}_k\}_{k \geq 0}.$ It therefore also holds in expectation. Hence, since for any $k$ the statement 
 \begin{align}
&\mathbb{E}[L^2(k,\boldsymbol{\Theta}_k)] \leq (1+\upsilon)^k \times \nonumber\\
& \mathbb{E}[\left( \Pi_{i=1}^k q(\boldsymbol{\Theta}_i)\right)L^2(0,\boldsymbol{\Theta}_0)]  \nonumber\\
&\quad + \left(1+\frac{1}{\upsilon}\right)\mathbb{E}[g^2(\boldsymbol{\Theta}_k,\boldsymbol{\Theta}_{k-1})q(\boldsymbol{\Theta}_k)].
\end{align}
\begin{subequations}
is true, we have that is must hold that
\label{eq:stochastic_bound}
\begin{align}
&\underset{k \to \infty}{\limsup} \quad \mathbb{E}[L^2(k,\boldsymbol{\Theta}_k)] \leq \\
& \underset{k \to \infty}{\limsup}\,\,\epsilon_0(1+\upsilon)^k \mathbb{E}\left[ \Pi_{i=1}^k q(\boldsymbol{\Theta}_i)\right] + \\
& \underset{k \to \infty}{\limsup}\,\, \left(1+\frac{1}{\upsilon}\right)\sum_{i=1}^k (1+\upsilon)^{k-i}\times \\
& \quad\mathbb{E}\left[ g^2(\boldsymbol{\Theta}_i,\boldsymbol{\Theta}_{i-1}) \Pi_{j=i}^k q(\boldsymbol{\Theta}_i)\right]. \label{eq:last}
\end{align}
\end{subequations}
where we have used that $L^2(0,\boldsymbol{\Theta}_0) \leq \epsilon_0.$

The term in \eqref{eq:last} complicates the analysis due to the correlation between $g^2(\boldsymbol{\Theta}_k,\boldsymbol{\Theta}_{k-1})$ and $\Pi_{j=1}^k q(\boldsymbol{\Theta}_{i}).$ In order to not have to explicitly compute or bound \eqref{eq:last} we will break the term apart. We then obtain the bound
\begin{align}
&\mathbb{E}[g^2(\boldsymbol{\Theta}_i,\boldsymbol{\Theta}_{i-1})\Pi_{j=i}^k q(\boldsymbol{\Theta}_j)] \leq \\ \label{eq:complication}
& \sqrt{B_1(B_{\mathbf{x}},B_{\boldsymbol{\lambda}})}\sqrt{\mathbb{E}[\Pi_{j=i}^k q(\boldsymbol{\Theta}_i)]}.  \nonumber
\end{align}  
The derivation of \eqref{eq:complication}
is cumbersome and provides limited insights. We therefore leave the derivation of the bound for the appendix.
We so far have established the following bound
\begin{subequations}
\label{eq:easy_stochasticbound}
\begin{align}
&\underset{k \to \infty}{\limsup} \quad \mathbb{E}[L^2(k,\boldsymbol{\Theta}_k)] \leq \\
& \underset{k \to \infty}{\limsup}\,\, \epsilon_0(1+\upsilon)^k \mathbb{E}\left[ \Pi_{i=1}^k q(\boldsymbol{\Theta}_i) \right] + \\
& \underset{k \to \infty}{\text{lim }}\text{sup} \left(1+\frac{1}{\upsilon}\right)\sum_{i=1}^k (1+\upsilon)^{k-i}\times \nonumber \\
& \quad \left( \sqrt{B_1(B_{\mathbf{x}},B_{\boldsymbol{\lambda}})}\sqrt{\mathbb{E}[\Pi_{j=i}^k q(\boldsymbol{\Theta}_i)]} + \nonumber \right)
\end{align}
\end{subequations}
Note that for us to be able to establish that the claim in Theorem \ref{theorem:main_theorem} is true we need to make some claim regarding the decay rate of the quantity
\begin{align}
\label{eq:averagecontraction}
\mathbb{E}\left[ \Pi_{j=i}^{k}q(\boldsymbol{\Theta}_j)\right]
\end{align}
When the quantity $\mathbb{E}[L^2(0,\boldsymbol{\Theta}_0)]$ is bounded without the need of performing a warm start similar statements to that in Theorem \ref{theorem:main_theorem} can be made. These are left for an extended version of the paper.
In particular we need both quantities in  \eqref{eq:averagecontraction} to decay exponentially fast. These statements are provided in the following theorem.
\begin{theorem}
Under Assumptions 
\label{theorem:contraction}
\begin{equation}
\mathbb{E}[\Pi_{i=j}^n q(\boldsymbol{\Theta}_i)] \leq C \gamma^{k-j}
\end{equation}
for some $C < \infty$  and $\gamma < 1.$ The quantities $C$ and $\gamma$ depend on $m,$ $\beta$ and $\nu$ in Definition \ref{definition:smallset}.  
\end{theorem}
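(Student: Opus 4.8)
The plan is to exploit the fact that $q$ is a contraction factor that is always at most one and is \emph{strictly} below one precisely on the small set $\mathcal{C}$, and to combine this with the geometric ergodicity guaranteed by Assumption \ref{assumption:geometrically_ergodic}. First I would make the two pointwise bounds on $q=1/(1+\delta(\cdot))$ explicit. Since $\delta(\boldsymbol{\theta})\geq 0$ and $\delta(\boldsymbol{\theta})=0$ exactly when $\mu(\boldsymbol{\theta})=0$ (the second argument of the minimum in \eqref{eq:delta} vanishes), we have $q(\boldsymbol{\theta})\leq 1$ for every $\boldsymbol{\theta}\in\mathcal{S}$. On $\mathcal{C}$, Assumption \ref{assumption:wellconditioned} supplies $\mu(\boldsymbol{\theta})\geq\mu_{\mathcal{C}}>0$ and $L(\boldsymbol{\theta})\leq L_{\mathcal{C}}<\infty$; substituting these into \eqref{eq:delta} gives $\delta(\boldsymbol{\theta})\geq \delta_{\mathcal{C}}\triangleq\min\{(\phi-1)\gamma_L/(\phi\Gamma_L),\,2\rho\mu_{\mathcal{C}}\gamma_L/(\rho^2\Gamma_L\gamma_L+\phi L_{\mathcal{C}}^2)\}>0$ for all $\boldsymbol{\theta}\in\mathcal{C}$, hence $q(\boldsymbol{\theta})\leq q_{\mathcal{C}}\triangleq 1/(1+\delta_{\mathcal{C}})<1$ there. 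Writing $N_{j,n}\triangleq\sum_{i=j}^{n}\mathbbm{1}_{\mathcal{C}}(\boldsymbol{\Theta}_i)$ for the number of visits to $\mathcal{C}$, the two bounds yield the deterministic estimate $\Pi_{i=j}^{n}q(\boldsymbol{\Theta}_i)\leq q_{\mathcal{C}}^{\,N_{j,n}}$, so it suffices to show $\mathbb{E}[q_{\mathcal{C}}^{\,N_{j,n}}]\leq C\gamma^{\,n-j}$.

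Next I would split the expectation according to whether the chain has visited $\mathcal{C}$ a linear number of times. Since the positive recurrent chain has $\pi(\mathcal{C})>0$, fix a fraction $c\in(0,\pi(\mathcal{C}))$. Bounding $q_{\mathcal{C}}^{\,N_{j,n}}\leq q_{\mathcal{C}}^{\,c(n-j)}$ on the event $\{N_{j,n}\geq c(n-j)\}$ and $q_{\mathcal{C}}^{\,N_{j,n}}\leq 1$ on its complement gives
\[
\mathbb{E}[q_{\mathcal{C}}^{\,N_{j,n}}]\leq q_{\mathcal{C}}^{\,c(n-j)}+\Pr\!\left[N_{j,n}<c(n-j)\right].
\]
The first term already decays geometrically with rate $q_{\mathcal{C}}^{\,c}<1$, so the statement reduces to a large-deviation bound certifying that visiting $\mathcal{C}$ too few times is exponentially unlikely in $n-j$.

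This last step is where I expect the main difficulty, and it is precisely where Assumption \ref{assumption:geometrically_ergodic} and the minorization \eqref{eq:majorization} are needed. The geometric drift $\Delta V\leq-\beta V+b\mathbbm{1}_{\mathcal{C}}$ forces excursions away from $\mathcal{C}$ to be short: by the standard Meyn--Tweedie machinery it implies a uniform geometric moment on the return time $\tau_{\mathcal{C}}$, i.e.\ $\sup_{\boldsymbol{\theta}\in\mathcal{C}}\mathbb{E}_{\boldsymbol{\theta}}[\kappa^{\tau_{\mathcal{C}}}]<\infty$ for some $\kappa>1$. Using the $(\nu,s)$-small-set property to build the split/regenerative decomposition, the successive inter-visit gaps can be stochastically dominated by i.i.d.\ variables with exponential tails, and a Chernoff/renewal argument then yields $\Pr[N_{j,n}<c(n-j)]\leq C'\gamma_1^{\,n-j}$ with $C'<\infty$ and $\gamma_1<1$ determined by $s$, $\beta$ and $\nu$. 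Combining this with the previous display and setting $\gamma\triangleq\max\{q_{\mathcal{C}}^{\,c},\gamma_1\}<1$ with a suitable $C$ gives the claim, and the advertised dependence of $C$ and $\gamma$ on $s$ (written $m$ in the statement), $\beta$ and $\nu$ is inherited from this tail bound.

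I would also record a purely analytic alternative via a twisted Foster--Lyapunov inequality: seek $h\geq 1$ and $\gamma<1$ with $q(\boldsymbol{\theta})(Ph)(\boldsymbol{\theta})\leq\gamma h(\boldsymbol{\theta})$, after which $M_k\triangleq\gamma^{-k}\big(\Pi_{i=0}^{k-1}q(\boldsymbol{\Theta}_i)\big)h(\boldsymbol{\Theta}_k)$ is a supermartingale and taking expectations yields $\mathbb{E}[\Pi_{i=0}^{k-1}q(\boldsymbol{\Theta}_i)]\leq\gamma^{k}\pi(h)$. With $h=V$ the drift $PV\leq(1-\beta)V+b\mathbbm{1}_{\mathcal{C}}$ gives the contraction factor $1-\beta$ off $\mathcal{C}$ immediately; the delicate point is once more on $\mathcal{C}$, where the additive jump $b$ must be absorbed by the single factor $q_{\mathcal{C}}$, so this route closes cleanly only when $q_{\mathcal{C}}(1-\beta+b)<1$ and otherwise requires replacing $V$ by a slowly growing $V^{\alpha}$. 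For this reason I would take the occupation-time argument above as the primary line of proof.
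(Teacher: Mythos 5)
Your proposal is correct and takes essentially the same route as the paper: both reduce the product to $q_{\mathcal{C}}^{N}$ with $N$ counting visits to the small set, both pass to the split-chain atom $\mathcal{C}\times\{1\}$ so that inter-visit times become i.i.d., both extract exponential tails for those return times from the geometric drift condition, and both finish with stochastic domination plus a Chernoff bound. The only difference is in the final bookkeeping — you split the expectation at a linear threshold $c(n-j)$ and bound the probability of too few visits, whereas the paper sums $\sum_k q_{\mathcal{C}}^k P(\tilde{N}(n-j)=k)$ directly over the dominating delayed-geometric renewal process — and both assemblies yield the same geometric rate.
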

We then have that 
\begin{subequations}
\begin{align}
&\underset{k \to \infty}{\text{lim }}\text{sup} \, \mathbb{E}[L^2(k,\boldsymbol{\Theta}_k)] \leq  \underset{k \to \infty}{\text{lim }}\text{sup}\, \epsilon_0((1+\upsilon)\gamma)^k \nonumber \\
& \underset{k \to \infty}{\text{lim }}\text{sup} \left(1+\frac{1}{\upsilon}\right)C\sum_{i=1}^k ((1+\upsilon)\gamma^{1/2})^{k-i}\times \nonumber \\
& \quad \left( \sqrt{B_1(B_{\mathbf{x}},B_{\boldsymbol{\lambda}})} \right)
\end{align}
\end{subequations}
Hence, it is sufficient to select $\nu < \frac{1-\gamma}{\gamma}$ which can always be done. Then, we obtain
\begin{subequations}
\begin{align}
&\underset{k \to \infty}{\lim}\,\text{sup} \quad \mathbb{E}[L^2(k,\boldsymbol{\Theta}_k)] \leq C\left(1 + \frac{1}{\nu} \right)\times \\
& \quad \frac{1+\nu}{1-(1+\nu)\gamma^{1/2}}\sqrt{B_1(B_{\mathbf{x}},B_{\boldsymbol{\lambda}})}
\end{align}
\end{subequations}
where the function of $\nu$ is minimized by selecting $\nu = \frac{1-\gamma^{1/2}}{1+\gamma^{1/2}}$ which yields the bound
\begin{align}
&\underset{k \to \infty}{\lim}\,\text{sup} \quad \mathbb{E}[L^2(k,\boldsymbol{\Theta}_k)] \leq \\
& \frac{2C}{(1-\gamma^{1/2})^2}\sqrt{B_1(B_{\mathbf{x}},B_{\boldsymbol{\lambda}})}.
\end{align}
\section{Numerical Experiments}
Consider the optimization problem \eqref{eq:quadratic} where the measurement matrices $\mathbf{H}_i^{(k)}$ and the measurement vectors $\mathbf{y}_i^{(k)}$ are modelled as an order 1 Auto-Regressive (AR) process
\begin{align}
\mathbf{H}_i^{(k)} = (1-\epsilon)\mathbf{H}_i^{k-1} + \epsilon \mathbf{V}
\end{align}
where $\mathbf{V}_{ij} \sim \mathcal{N}(0,1)$ and $\epsilon = 0.01.$ The 10 nodes are connected via a randomly generated undirected connected graph where each connection independently appears with probability 0.5. The step size $\rho = 10$ for the entire simulation which consists of 10000 tracks of length 1000. Further, the estimated fourth order deviations appearing in Assumption \ref{assumption:bounded variations} are estimated to be $\hat{B}_x^{4} = 7.2 \cdot 10^{-3}$ and $\hat{B}_l^{4} = 7.3 \cdot 10^{-4}.$ The estimated primal and dual second order residuals are displayed in Figures \ref{fig:primal} and \ref{fig:dual} respectively.
\begin{figure}
\begin{center}
\scalebox{0.53}{\input{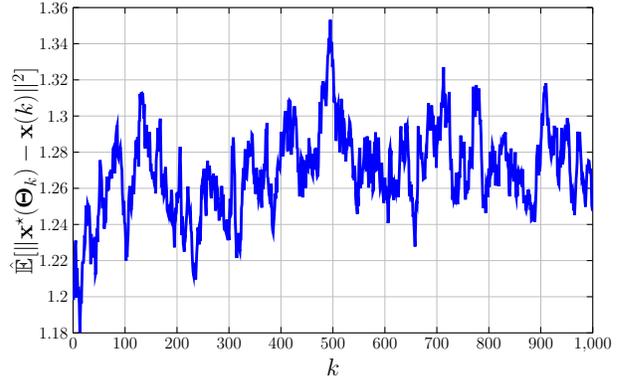}}
\end{center}
\caption{\label{fig:primal}Estimate of mean square deviation from the sequence of primal optimal points.}
\end{figure}
\begin{figure}
\begin{center}
\scalebox{0.53}{\input{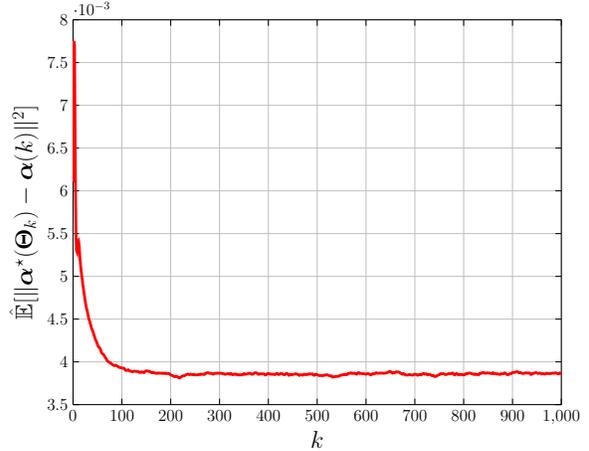}}
\end{center}
\caption{\label{fig:dual}Estimate of mean square deviation from the sequence of dual optimal points.}
\end{figure}

\section{Conclusion and Further work \label{section:conclusions}}
In this paper we establish that ADMM can track a solution in the mean square error sense even when smoothness and strong convexity are periodically lost. An interest line of further research is to extend the analysis for time-varying constraints so as to extend the applicability of the result.

% if have a single appendix:
%\appendix[Proof of the Zonklar Equations]
% or
%\appendix  % for no appendix heading
% do not use \section anymore after \appendix, only \section*
% is possibly needed

% use appendices with more than one appendix
% then use \section to start each appendix
% you must declare a \section before using any
% \subsection or using \label (\appendices by itself
% starts a section numbered zero.)
%

\appendices

\section{Proof of Lemma \ref{lemma:detcontraction}\label{appendix:lemma1}}
Whenever the objective function is strongly convex and has Lipschitz continuous gradients with parameters $\mu(\boldsymbol{\theta})$ and $L(\boldsymbol{\theta})$ respectively the statement in Lemma \ref{lemma:detcontraction} readily follows from Theorem 1 in \cite{tracking}. The main additional issues we face with the statement in Lemma \ref{lemma:detcontraction} is showing the the same statement holds with $\delta(\boldsymbol{\theta}) = 0$ when the objective function is only differentiable and convex and deal with the lack of uniqueness in primal variables. 

For any primal optimizer $\mathbf{z}^{\star}(\boldsymbol{\Theta}_k)$ it holds true that
\begin{align}
& \|\mathbf{u}(k) - \mathbf{u}^{\star}(\boldsymbol{\Theta}_k)\|_{\mathbf{G}} \leq 
\|\mathbf{u}(k) - \mathbf{u}^{\star}(\boldsymbol{\Theta}_{k-1})\|_{\mathbf{G}} + \nonumber\\
&  \rho\|\mathbf{z}^{\star}(\boldsymbol{\Theta}_k)-\mathbf{z}^{\star}(\boldsymbol{\Theta}_{k-1})\| + \frac{1}{\rho}\|\boldsymbol{\alpha}^{\star}(\boldsymbol{\Theta}_k) - \boldsymbol{\alpha}^{\star}(\boldsymbol{\Theta}_{k-1})\|.
\end{align}
Note that this will hold regardless of the choice of $\mathbf{z}^{\star}(\boldsymbol{\Theta}_j)$ for both $j = k, k-1$ as long as the same choice is made within the same equation. Then, as in Theorem 1 in \cite{tracking} we have that
\begin{align}
\|\mathbf{z}^{\star}(\boldsymbol{\Theta}_{k}) - \mathbf{z}^{\star}(\boldsymbol{\Theta}_{k-1})\| =\\
 \frac{\sqrt{\rho m}}{\sqrt{n}}\|\mathbf{x}^{\star}(\boldsymbol{\Theta}_{k}) - \mathbf{x}^{\star}(\boldsymbol{\Theta}_{k-1})\|,
\end{align}
where again, the choice must be consistent in the sense that $\mathbf{z}^{\star}_{ij}(\boldsymbol{\Theta}_k) = \mathbf{x}^{\star}_i(\boldsymbol{\Theta}_k) = \mathbf{x}^{\star}_j(\boldsymbol{\Theta}_k)$ and must be consistent with the minimizers in Assumption \ref{assumption:bounded variations}.

When bounding the deviation in the multipliers we use the following optimality conditions
\begin{align}
\nabla_{\mathbf{x}} f(\mathbf{x}^{\star}(\boldsymbol{\Theta}_k),\boldsymbol{\Theta}_k) + \mathbf{E}_{\text{o}}^T\boldsymbol{\alpha}^{\star}(\boldsymbol{\Theta}_k) = \mathbf{0} \nonumber\\
\nabla_{\mathbf{x}} f(\mathbf{x}^{\star}(\boldsymbol{\Theta}_{k-1}),\boldsymbol{\Theta}_{k-1}) + \mathbf{E}_{\text{o}}^T\boldsymbol{\alpha}^{\star}(\boldsymbol{\Theta}_{k-1}) = \mathbf{0},
\end{align}
where the gradients take the same value regardless of the choice of primal minimizer. From here the proof follows analogously to that in Theorem 1 in \cite{tracking}.

\section{Bounding \eqref{eq:complication} \label{appendix:cauchy}}
In order to bound \eqref{eq:complication} we use the Cauchy-Schwarz inequality to state
\begin{equation}
\label{eq:rvbound}
\mathbb{E}[XY] \leq \sqrt{\mathbb{E}[X^2]\mathbb{E}[Y^2]}
\end{equation}
to obtain
\begin{align}
&\mathbb{E}[g^2(\boldsymbol{\Theta}_i,\boldsymbol{\Theta}_{i-1})\Pi_{j=i}^k q(\boldsymbol{\Theta}_j)] \leq \\
&\quad\sqrt{\mathbb{E}[g^4(\boldsymbol{\Theta}_i,\boldsymbol{\Theta}_{i-1})]\mathbb{E}[\Pi_{j=i}^k q^2(\boldsymbol{\Theta}_j)]}
\end{align}
We now in turn must bound the quantity
\begin{equation}
\mathbb{E}[g^4(\boldsymbol{\Theta}_i,\boldsymbol{\Theta}_{i-1})]
\end{equation}
For notational simplicity let $A \triangleq \boldsymbol{\Delta}\mathbf{x}^{\star}(\boldsymbol{\Theta}_i,\boldsymbol{\Theta}_{i-1}) $ and $B \triangleq \boldsymbol{\Delta}\nabla_{\mathbf{x}}f^{\star}(\boldsymbol{\Theta}_i,\boldsymbol{\Theta}_{i-1}).$
Then,
\begin{align}
&\mathbb{E}[g^4(\boldsymbol{\Theta}_i,\boldsymbol{\Theta}_{i-1})] = \\
& \left(\frac{\rho m}{n}\right)^2 \mathbb{E}[A^4] + 4 \left( \frac{(\rho m)^{3/2} }{n^{3/2}\sqrt{2\rho \gamma_L}}\right)\mathbb[A^3B]  + \\
& + 4\left(\frac{\sqrt{\rho m}}{\sqrt{n}(2\rho \gamma_L)^{3/2}} \right)\mathbb{E}[B^3A] \\
& 6 \left( \frac{m}{2n\gamma_L}\right)\mathbb{E}[A^2B^2] + \left(\frac{1}{4\rho^2\gamma_L^2}\right)\mathbb{E}[B^4].
\end{align}
We are now to find bounds on the polynomials in $A$ and $B.$ Note that we require to compute this bounds in a cumbersome manner as we have not assumed anything regarding their cross-correlation, but only on the boundedness of their fourth order moments. In turn, the bound on the fourth order moment, will imply a bound on the second order moment.  In particular by applying the bound \eqref{eq:rvbound} we obtain the following bounds
\begin{align}
& \mathbb{E}[B^3A] \leq \sqrt{\mathbb{E}[B^4]\mathbb{E}[B^2A^2]} \\
& \mathbb{E}[A^3B] \leq \sqrt{\mathbb{E}[A^4]\mathbb{E}[A^2B^2]} \\
&\mathbb{E}[A^2B^2] \leq \sqrt{\mathbb{E}[A^4]\mathbb{E}[B^4]}\\
&\mathbb{E}[AB] \leq \sqrt{\mathbb{E}[A^2]\mathbb{E}[B^2]}, 
\end{align} 
which are further bounded as
\begin{align}
&\mathbb{E}[B^3A] \leq \sqrt{\mathbb{E}[B^4]\left(\sqrt{\mathbb{E}[A^4]\mathbb{E}[B^4]} \right)} \\
& \mathbb{E}[A^3B] \leq \sqrt{\mathbb{E}[A^4]\left( \sqrt{\mathbb{E}[A^4]\mathbb{E}[B^4]}\right)}
\end{align}
Then we have that
\begin{align}
& \mathbb{E}[B^3A] \leq  B_{\boldsymbol{\lambda}}^3B_{\mathbf{x}} \\
& \mathbb{E}[A^3B] \leq  B_{\mathbf{x}}^3B_{\boldsymbol{\lambda}}\\
& \mathbb{E}[A^2B^2] \leq B_{\boldsymbol{\lambda}}^2B_{\mathbf{x}}^2 \\
& \mathbb{E}[AB] \leq B_{\boldsymbol{\lambda}}B_{\mathbf{x}}.
\end{align}
Now we have that
\begin{align}
\label{eq:above1}
&\mathbb{E}[g^4(\boldsymbol{\Theta}_i,\boldsymbol{\Theta}_{i-1})] \leq \left(\frac{\rho m}{n}\right)^2 B_{\boldsymbol{x}}^4 + \\
& 4\left( \frac{\rho m^{3/2}}{n^{3/2}\sqrt{2\gamma_L}}\right)\left(B_{\mathbf{x}}^3 B_{\boldsymbol{\lambda}}\right) +\nonumber \\
& 4 \left(\frac{\sqrt{\rho m}}{\sqrt{n}(2\rho \gamma_L)^{3/2}} \right)\left(B_{\boldsymbol{\lambda}}^3 B_{\mathbf{x}}\right) +\nonumber \\
& 6 \left(\frac{m}{n \gamma_L}\right)B_{\mathbf{x}}^2B_{\boldsymbol{\lambda}}^2 + \left(\frac{1}{4\rho^2 \gamma_L^2}\right)B_{\boldsymbol{\lambda}}^4\nonumber.
\end{align}
Let $B_1(B_{\mathbf{x}},B_{\boldsymbol{\lambda}})$ denote the RHS of \eqref{eq:above1}. 
Analogously we have 
\begin{align}
& \mathbb{E}_{\mathcal{C}}[g^2(\boldsymbol{\Theta}_i,\boldsymbol{\Theta}_{i-1})] \leq \left(\frac{\rho m}{n}\right)B_{\mathbf{x}}^2 + \label{eq:above2}\\
&2 \left(\sqrt{\frac{m}{n \gamma_L}}\right)B_{\mathbf{x}}B_{\boldsymbol{\lambda}} + \left(\frac{1}{2\rho \gamma_L}\right)B_{\boldsymbol{\lambda}}^2.\nonumber
\end{align}
Analogously let $B_2(B_{\mathbf{x}},B_{\boldsymbol{\lambda}})$ denote the RHS of \eqref{eq:above2}.
\section{Proof of Theorem \ref{theorem:contraction}\label{appendix:theorem}}
Our goal is to ultimately provide an exponential bound for the quantity $\mathbb{E}[\Pi_{i=j}^k q(\boldsymbol{\Theta}_i)].$ To do this we will have to use the geometric ergodicity properties of the Markov process to make claims regarding how much time it takes to revisit the set $\mathcal{C}.$ This in turn, will allow us to establish how many visits to the set $\mathcal{C}$ the process will make in a certain amount of time. For the bound on the expectation $\mathbb{E}[\Pi_{i=j}^k q(\boldsymbol{\Theta}_i)]$ to behave exponentially a certain independence between different draws of the Markov process is desirable. However, due to the non-atomic nature of the individual sets of the Markov process this is not possible in general state spaces. For this purpose we will rely on the split chain (see \cite{booktweedie}) for which we are able to construct an atomic set $\mathcal{C}\times\{1\}.$ For notational convenience let us define $\mathcal{C}_1 \triangleq \mathcal{C} \times \{1\}.$ 

From now on we will be working with the split chain. Note that it follows from Assumption \ref{assumption:geometrically_ergodic} that the split chain $\{\boldsymbol{\Phi}_n\}_{ n \geq 0}$ has an accessible atom $\mathcal{C}_1.$ By construction, whenever we reach $\mathcal{C} = \mathcal{C}_0 \cup \mathcal{C}_1$ we may understand that a coin is tossed, such that with probability $1-\beta$ we land on $\mathcal{C}_0$ and with probability $\beta$ we land on $\mathcal{C}_1.$ Further, the transition probability from $\mathcal{C}_1$ to any other point is independent on where from $\mathcal{C}_1$ we came from.
 Let $S_k$ denote the time index at which the set $\mathcal{C}_1$ is re-visited for the $k^{\text{th}}$ time. Then, let $\tau_k \triangleq S_k - S_{k-1},$ for $k \geq 1.$ The inter-renewal times $\tau_k$ can be shown to be i.i.d random variables \cite{kalashnikov}. For convenience, when $k$ is not relevant it will be dropped and we will talk about $\tau.$
 Further, let $N_{\mathcal{C}_1}(n-j)$ denote the number of visits to the set $\mathcal{C}_1.$ The process $\{\boldsymbol{\Phi}_k\}_{k \geq 0}$ makes to $\mathcal{C}_1$  in a time interval of length $n-j.$ 
\begin{lemma}
Let $\tau_h^i \triangleq \min_{k:S_k > i} S_k-i$ be the first time the process reaches $\mathcal{C}_1$ after time $i.$ Then, $\exists \, M_2 < \infty,$ and $\eta < 1$ such that $P(\tau_h^i =n ) \leq  M_2 \eta^n.$
\end{lemma}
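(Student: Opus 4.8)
The plan is to reduce the claim to a geometric tail bound on the hitting time of the atom $\mathcal{C}_1$ from a fixed state, and then to average that bound over the stationary law. By the Markov property, conditionally on $\boldsymbol{\Phi}_i$ the forward recurrence time $\tau_h^i$ has the same distribution as the first hitting time $\tau_{\mathcal{C}_1} \triangleq \inf\{ m \geq 1 : \boldsymbol{\Phi}_m \in \mathcal{C}_1 \}$ of the split chain started from $\boldsymbol{\Phi}_i$, so that
\[
P(\tau_h^i \geq n) = \mathbb{E}[\, P_{\boldsymbol{\Phi}_i}(\tau_{\mathcal{C}_1} \geq n)\,].
\]
Because $P(\tau_h^i = n) \leq P(\tau_h^i \geq n)$, it is enough to bound the right-hand side uniformly in $i$.

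First I would establish the conditional geometric tail. Assumption~\ref{assumption:geometrically_ergodic} supplies the geometric drift inequality $\Delta V \leq -\beta V + b\,\mathbbm{1}_{\mathcal{C}}$, equivalently $PV \leq (1-\beta)V + b\,\mathbbm{1}_{\mathcal{C}}$, and this drift is inherited by the split chain, for which $\mathcal{C}_1$ is an accessible atom. By the standard equivalence between geometric drift and geometric moments of hitting times for chains possessing an atom (Meyn--Tweedie \cite{booktweedie}, Chapter~15), there exist $\kappa > 1$ and $B < \infty$ with $\mathbb{E}_{\mathbf{x}}[\kappa^{\tau_{\mathcal{C}_1}}] \leq B\, V(\mathbf{x})$. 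Markov's inequality then gives
\[
P_{\mathbf{x}}(\tau_{\mathcal{C}_1} \geq n) = P_{\mathbf{x}}(\kappa^{\tau_{\mathcal{C}_1}} \geq \kappa^n) \leq \kappa^{-n}\, \mathbb{E}_{\mathbf{x}}[\kappa^{\tau_{\mathcal{C}_1}}] \leq B\,V(\mathbf{x})\,\kappa^{-n}.
\]
Mechanically, the same bound can be read off by writing $\tau_{\mathcal{C}_1}$ as a $\mathrm{Geom}(\beta)$ number of successive excursions to $\mathcal{C}$, each of which has a geometric tail under the drift condition.

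It then remains to integrate against the law of $\boldsymbol{\Phi}_i$. Since $V$ depends only on the $\boldsymbol{\theta}$-coordinate and the chain is initialized at stationarity, $\mathbb{E}[V(\boldsymbol{\Phi}_i)] = \mathbb{E}[V(\boldsymbol{\Theta}_i)] = \pi(V)$ for every $i$, and integrating the drift inequality against $\pi$ yields $\pi(V) \leq b/\beta < \infty$. Combining this with the conditional bound,
\[
P(\tau_h^i \geq n) \leq B\,\kappa^{-n}\, \mathbb{E}[V(\boldsymbol{\Phi}_i)] = B\,\pi(V)\,\kappa^{-n},
\]
so setting $M_2 \triangleq B\,\pi(V) < \infty$ and $\eta \triangleq \kappa^{-1} < 1$ gives $P(\tau_h^i = n) \leq M_2\,\eta^n$ uniformly in $i$, as claimed.

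The main obstacle is the conditional geometric-moment bound $\mathbb{E}_{\mathbf{x}}[\kappa^{\tau_{\mathcal{C}_1}}] \leq B\,V(\mathbf{x})$ on the split chain: one must verify that the drift of Assumption~\ref{assumption:geometrically_ergodic} transfers to the split construction so that the atom $\mathcal{C}_1$ inherits geometrically decaying hitting times, and, crucially, that the prefactor is linear in $V$ so that the subsequent average against $\pi$ stays finite. The remaining pieces---the Markov reduction, the $i$-independence of $\mathbb{E}[V(\boldsymbol{\Phi}_i)]$ coming from the stationary initialization, and the finiteness $\pi(V)<\infty$---are routine consequences of Assumption~\ref{assumption:geometrically_ergodic}.
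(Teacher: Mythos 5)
Your proof is correct, but it follows a genuinely different route from the paper's. The paper argues via renewal theory looking \emph{backwards}: it first obtains a uniform geometric-moment bound $\sup_{\boldsymbol{\theta}\in\mathcal{C}}\mathbb{E}[\kappa^{\tau}|\boldsymbol{\theta}]<\infty$ for return times to the small set, transfers it to the atom $\mathcal{C}_1$ at the cost of a factor $1/\beta$, deduces $P(\tau=k)\leq M_1\eta^k$ for the inter-renewal time, and then decomposes the event $\{\tau_h^i=n\}$ over the time $i-j$ of the \emph{last} visit to $\mathcal{C}_1$ before $i$ (legitimate because the chain starts in $\pi$), which yields $P(\tau_h^i=n)\propto\sum_{j\geq 0}P(\tau=n+j)$ and the geometric bound after summing the series. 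You instead look \emph{forwards}: you condition on the current state $\boldsymbol{\Phi}_i$, invoke the Meyn--Tweedie equivalence between the drift condition of Assumption \ref{assumption:geometrically_ergodic} and $V$-linear geometric hitting-time moments $\mathbb{E}_{\mathbf{x}}[\kappa^{\tau_{\mathcal{C}_1}}]\leq B V(\mathbf{x})$ for the split chain's atom, and then average over the stationary law using $\pi(V)\leq b/\beta<\infty$. Your version is arguably cleaner (it directly produces a tail bound, bypassing the renewal decomposition entirely), but it places the full technical weight on the $V$-linear moment bound for the split chain, which you correctly flag as the step needing verification (including the usual care when $\mathcal{C}$ is only $s$-small for $s>1$, and the truncation argument behind $\pi(V)\leq b/\beta$). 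The paper's heavier renewal machinery has a side benefit you would lose: the identity relating $P(\tau_h^i = n)$ to the tail $P(\tau \geq n)$ derived along the way is reused in the subsequent Lemma \ref{lemma:geometric} to build the dominating delayed-geometric variables, so if you adopted your route you would need to re-derive that relation (or rework the later lemmas to run off your state-wise tail bound instead).
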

\begin{proof} 
Let $\tau_s$ denote the time between visits to $\mathcal{C}$ corresponding to the Markov process $\{\boldsymbol{\Theta}_k\}_{k \geq 0}.$
It follows from Assumption \ref{assumption:geometrically_ergodic} that there exists a $\kappa > 1$such that
\begin{equation}
\underset{\boldsymbol{\theta} \in \mathcal{C}}{\text{sup}} \quad \mathbb{E}[\kappa^{\tau_s}|\boldsymbol{\theta}] < \infty.
\end{equation}
 The equation above implies that given that we start at any point $\theta \in \mathcal{C}$ the  return time to $\mathcal{C}$ follows a distribution with tails that decay at least exponentially.
From Assumption \ref{assumption:geometrically_ergodic} and \eqref{eq:majorization} it follows that for any $\boldsymbol{\theta}\in \mathcal{C}$ we have that
\begin{align}
&\mathbb{E}[\kappa^{\tau_{\mathcal{C}}}|\boldsymbol{\theta}] = \nonumber\\ 
&(1-\beta)\mathbb{E}[\kappa^{\tau_{\mathcal{C}_0}}|\boldsymbol{\theta}\times\{0\}] + \beta \mathbb{E}[\kappa^{\tau}|\boldsymbol{\theta}\times\{1\}]
\end{align}
and consequently
\begin{equation}
\underset{\boldsymbol{\phi} \in \mathcal{C}_1}{\text{sup}} \quad \mathbb{E}[\kappa^{\tau}|\boldsymbol{\Phi}] \leq \frac{1}{\beta}\underset{\boldsymbol{\theta} \in \mathcal{C}}{\text{sup}} \quad \mathbb{E}[\kappa^{\tau}_{\mathcal{C}}|\boldsymbol{\theta}] < \infty.
\end{equation}
 From this, it follows that $P(\tau = k) \leq M_1\left(\frac{1}{\kappa}\right)^{k}$ for some $M_1 < \infty. $ For convenience, let $\eta \triangleq \frac{1}{\kappa}.$ 
Before going into more details we will take a closer look into how to relate the quantities $\tau$ and $\tau_h^i.$
Note that we have the following equalities
\begin{align}
& P(\tau = n) = P(\boldsymbol{\Phi}_{i+n} \in \mathcal{C}_1,\boldsymbol{\Phi}_{i + n - 1} \not \in \mathcal{C}_1,\\
& \hdots,\boldsymbol{\Phi}_{i+1}\not \in \mathcal{C}_1 | \boldsymbol{\Phi}_{i} \in \mathcal{C}_1) \, \forall i \geq 0 \nonumber\\
& P(\tau_h^i = n ) = P(\boldsymbol{\Phi}_{i+1}\in \mathcal{C}_1,\boldsymbol{\Phi}_{i+n-1}\not \in \mathcal{C}_1, \\
& \hdots, \boldsymbol{\Phi}_{i+1} \not \in \mathcal{C}_1) \nonumber.
\end{align}
This will allow us to relate the two quantities later on.
Further, we have that for any $j \geq 0$ 
\begin{equation}
P(\tau_i^h = n | \boldsymbol{\Theta}_{i} \not \in \mathcal{C}_1,\hdots,\boldsymbol{\Theta}_{i-j} \in \mathcal{C}_1)
\end{equation}
does not depend on conditioning further than to the time instant in which the process visits $\mathcal{C}_1$ as this constitutes a renewal of the chain. Hence, we have that
\begin{align}
&P(\tau_h^{i}=n) = P(\tau_h^{i}=n,\boldsymbol{\Phi}_{i}\in\mathcal{C}_1) +\\
& \sum_{j=1}^{\infty}P(\tau_h^i = n,\boldsymbol{\Phi}_{i} \not \in \mathcal{C}_1,\hdots,\boldsymbol{\Phi}_{i-j}\in \mathcal{C}_1),\nonumber
\end{align}
where we may go arbitrarily back as the process is initialized in the stationary distribution and is therefore equivalent in behaviour as a process that has been always running. Further, we have that 
$P(\tau_h^{i} = n,\boldsymbol{\Phi}_i \not \in \mathcal{C}_1,\hdots, \boldsymbol{\Phi}_{i-j} \in \mathcal{C}_1) = \frac{P(\tau=n+j)}{\beta\pi(\mathcal{C})},$
implying
\begin{align}
\label{eq:taubounds}
&P(\tau_h^i = n) = \frac{1}{\beta\pi(\mathcal{C})}\sum_{j=0}^{\infty}P(\tau = n + j)\nonumber\\
&  \leq M_1\mu(\mathcal{C}) \sum_{j=0}^{\infty} \eta^{n+j} \leq  M_1 \frac{1}{1-\eta}\eta^n.
\end{align}
\end{proof}
\begin{lemma}\label{lemma:geometric}
There exists i.i.d. $k_0-$delayed geometric random variables $\tilde{\tau}$ such that $P(\tilde{\tau}_k \geq n) \geq P(\tau \geq n)$ and $P(\tilde{\tau}_k \geq n ) \geq P(\tau_j^i \geq n).$
\end{lemma}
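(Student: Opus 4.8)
The plan is to exhibit an explicit stochastically dominating distribution rather than to construct a coupling, since both claimed inequalities are statements about survival functions, i.e.\ first-order stochastic dominance. Everything needed is already packaged in the geometric tail bounds proved above. From $P(\tau = k) \leq M_1 \eta^k$ one gets $P(\tau \geq n) \leq M_1 \eta^n/(1-\eta)$, and summing \eqref{eq:taubounds} gives $P(\tau_h^i \geq n) \leq M_2 \eta^n/(1-\eta)$, with the crucial feature that the latter bound holds uniformly in $i$. Setting $M \triangleq \max\{M_1, M_2\}/(1-\eta)$, both survival functions are therefore dominated by $M\eta^n$. I would then take $\tilde{\tau}_k = k_0 + G_k$, where the $G_k$ are i.i.d.\ geometric with $P(G_k \geq m) = \eta^m$, so that each $\tilde{\tau}_k$ is $k_0$-delayed geometric with $P(\tilde{\tau}_k \geq n) = 1$ for $n \leq k_0$ and $P(\tilde{\tau}_k \geq n) = \eta^{\,n-k_0}$ for $n > k_0$.

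The verification then splits into two regimes. For $n \leq k_0$ the claim is immediate, since $P(\tilde{\tau}_k \geq n) = 1 \geq P(\tau \geq n)$ and likewise $\geq P(\tau_h^i \geq n)$; this is exactly the role of the deterministic offset $k_0$, and it is needed precisely because for small $n$ the true survival probabilities are near one and cannot be beaten by a pure geometric tail. For $n > k_0$ I would write $P(\tilde{\tau}_k \geq n) = \eta^{-k_0}\eta^n$ and impose $\eta^{-k_0} \geq M$. Since $\eta < 1$ the left-hand side grows without bound in $k_0$, so any $k_0 \geq \log M/\log(1/\eta)$ suffices, and for such a choice $P(\tilde{\tau}_k \geq n) \geq M\eta^n \geq P(\tau \geq n)$, with the identical chain of inequalities for $\tau_h^i$. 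Fixing one such $k_0$ produces a single law that dominates both families simultaneously.

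Finally, the $\tilde{\tau}_k$ are taken as independent copies of this fixed distribution, so they are i.i.d.\ by construction, consistent with the i.i.d.\ structure of the inter-renewal times $\tau_k$ coming from the renewal property of the atom $\mathcal{C}_1$. The only point requiring genuine care is that one fixed dominating law must work at once for $\tau$ and for \emph{every} $\tau_h^i$; this is where the uniformity in $i$ of \eqref{eq:taubounds} is indispensable, and it is the reason that estimate was deliberately stated with a constant independent of the starting time. Past that bookkeeping of constants and the two-regime split, the argument is a routine stochastic-domination-by-a-delayed-geometric construction, so I expect no substantive obstacle beyond choosing $k_0$ large enough to absorb the leading constant $M$.
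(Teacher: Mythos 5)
Your proposal is correct and follows essentially the same route as the paper: both arguments dominate the tails of $\tau$ and $\tau_h^i$ by the survival function of a $k_0$-delayed geometric, using the delay to trivially cover small $n$ (where the survival function equals one) and choosing $k_0$ large enough that the geometric tail absorbs the multiplicative constant for $n > k_0$. The only immaterial difference is bookkeeping: you keep the tail rate $\eta$ and absorb the constant $M$ directly through the delay via $\eta^{-k_0} \geq M$, whereas the paper slackens the rate to $1-\epsilon > \eta$ and lets the ratio $(\eta/(1-\epsilon))^n$ swallow the constant before fixing $k_0$; your version is, if anything, the cleaner of the two.
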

\begin{proof}
From \eqref{eq:taubounds} it follows that $P(\tau_h^i = n) = \frac{1}{\pi(\mathcal{C})\beta}P(\tau \geq n).$ hence, we have that if we provide an upper bound for the quantity $P(\tau_h^i \geq n)$ we will provide a bound for the quantity $P(\tau \geq n).$ Consequently we will start by working with the random variable $\tau_h^i.$ In particular, we know what $P(\tau_j^i = n) \leq M_2 \eta^n \leq \frac{M_2}{\pi(\mathcal{C})\beta}\eta^n$ Select some $\epsilon > 0$ such that $1-\epsilon > \eta,$ which can always be done. Then, there exists some integer $d$ such that $\frac{M_2}{\beta\pi(\mathcal{C})} \leq (1-\epsilon)^d,$ and therefore
\begin{equation}
P(\tau \geq n) = \frac{P(\tau_h^i)}{\beta\pi(\mathcal{C})} \leq (1-\epsilon)^d \eta^n. 
\end{equation}
Let $\iota \triangleq \frac{\eta}{(1-\epsilon)} < 1,$
then
\begin{equation}
P(\tau \geq n) = \frac{P(\tau_h^i = n)}{\beta\pi(\mathcal{C})} \leq (1-\epsilon)^{n+d}\iota^n
\end{equation}
and for sufficiently large $n,$ $(1-\epsilon)^d\iota^n \leq \epsilon.$ Hence, there exists a sufficiently large $n \geq k_0$ such that for $n \geq k_0$ we have
\begin{equation}
P(\tau \geq n) = \frac{P(\tau_h^i = n)}{\beta\pi(\mathcal{C})} \leq \epsilon(1-\epsilon)^n,
\end{equation} 
which in turn implies that $P(\tau_h^i \geq n ) \leq (1-\epsilon)^{n+1}.$ Then, let $\tilde{\tau}_k$ be a random variable such that
\begin{equation}
\label{eq:delayeddist}
P(\tilde{\tau}_k = n) = \begin{cases}
0 & \text{if } n < k_0, \\
\epsilon(1-\epsilon)^{n-k_0} & \text{otherwise.}
\end{cases}
\end{equation}
Then, $P(\tilde{\tau}_k \geq n) = \frac{(1-\epsilon)^n}{(1-\epsilon)^{k_0}}u(n-k_0) + \mathbbm{1}_{[1,k_0-1]\cap \mathbb{Z}}(n),$ where $u(\cdot)$ denotes the Heaviside step-function and $\mathbbm{1}_A(\cdot)$ denotes the characteristic function of the set $A.$ Consequently, we have that 
\begin{align}
P(\tau_h^i \geq n) \leq P(\tilde{\tau}_k \geq n) \\
P(\tau \geq n) \leq P(\tilde{\tau}_k \geq n).
\end{align}
\end{proof}
\begin{lemma}\label{lemma:cdominate}
For $k$ i.i.d. $k_0-$delayed geometric random variables $\tilde{\tau}_{i},$ $i =1,\hdots,k.$ with distribution \eqref{eq:delayeddist} we have
\begin{align}
&P(\tilde{\tau}_1+\hdots+\tilde{\tau}_k \geq n) \\
& \quad \geq P(\tau_h^i + \tau_1+\hdots+\tau_{k-1} \geq n), \nonumber
\end{align}
and 
\begin{align}
&P(\tilde{\tau}_1+\hdots+\tilde{\tau}_k \geq n) \\
& \quad \geq P(\tau_1+\hdots + \tau_k \geq n).\nonumber
\end{align} 
\end{lemma}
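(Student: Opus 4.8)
The plan is to deduce both inequalities from the single-variable tail dominance already established in Lemma~\ref{lemma:geometric}, combined with the elementary fact that the usual stochastic order is preserved under sums of independent random variables. Lemma~\ref{lemma:geometric} supplies $P(\tau \geq n) \leq P(\tilde{\tau}_k \geq n)$ and $P(\tau_h^i \geq n) \leq P(\tilde{\tau}_k \geq n)$ for every $n$; in the language of stochastic orders this says that each of $\tau$ and $\tau_h^i$ is stochastically dominated by the $k_0$-delayed geometric law \eqref{eq:delayeddist}. Since the $\tilde{\tau}_i$ are i.i.d., it therefore suffices to show that replacing each summand by a stochastically larger, independent copy can only enlarge the upper tail of the sum.

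First I would record the independence structure, since this is what licenses the convolution argument. The inter-renewal times $\tau_1,\ldots,\tau_{k-1}$ (respectively $\tau_1,\ldots,\tau_k$) are i.i.d.\ because every visit to the accessible atom $\mathcal{C}_1$ regenerates the split chain, and $\tau_h^i$, the first passage time to $\mathcal{C}_1$ after time $i$, is independent of the inter-renewal times that follow the first such visit by the strong Markov (regeneration) property at the hitting instant. Hence in both sums the summands are mutually independent, matching the independence of the $\tilde{\tau}_i$.

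The core step is a coupling. For a generic collection of independent summands $X_1,\ldots,X_k$, each satisfying $X_j \leq_{\mathrm{st}} \tilde{\tau}_j$, Strassen's theorem provides random variables $\hat{X}_j$ and $\hat{\sigma}_j$ on a common space, with $\hat{X}_j$ distributed as $X_j$ and $\hat{\sigma}_j$ as $\tilde{\tau}_j$, such that $\hat{X}_j \leq \hat{\sigma}_j$ almost surely. Using the independence recorded above, I would stack these pairwise couplings into a single joint coupling of the two vectors of summands, so that
\begin{equation}
\hat{X}_1 + \cdots + \hat{X}_k \leq \hat{\sigma}_1 + \cdots + \hat{\sigma}_k \quad \text{almost surely.}
\end{equation}
Intersecting with the event $\{\,\cdot \geq n\,\}$ and using monotonicity of probability then yields $P(X_1+\cdots+X_k \geq n) \leq P(\tilde{\tau}_1+\cdots+\tilde{\tau}_k \geq n)$. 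Choosing $(X_1,\ldots,X_k)=(\tau_1,\ldots,\tau_k)$ gives the second claimed inequality, and $(X_1,\ldots,X_k)=(\tau_h^i,\tau_1,\ldots,\tau_{k-1})$ gives the first.

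I expect the main obstacle to be purely bookkeeping: verifying that the independence across summands genuinely holds for the mixed sum $\tau_h^i + \tau_1 + \cdots + \tau_{k-1}$, i.e.\ that conditioning on the value of $\tau_h^i$ does not bias the subsequent inter-renewal times. This is precisely where the atomic structure of $\mathcal{C}_1$ in the split chain is indispensable: once the chain reaches the accessible atom it forgets its past, so the post-hitting inter-renewal times are independent of $\tau_h^i$ and i.i.d.\ among themselves. Granting this regeneration property, which the construction of the split chain supplies, the closure of the stochastic order under convolution makes the remainder routine.
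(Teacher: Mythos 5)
Your proof is correct, and it reaches the same reduction as the paper — single-variable tail dominance from Lemma \ref{lemma:geometric}, lifted to sums using independence of the summands — but it executes the lifting step by a different mechanism. The paper proves the closure property inline by induction on $k$: it expands $P(\tau_1+\cdots+\tau_{k+1}\geq n)=\sum_m P(\tau_{k+1}=m)\,P(\tau_1+\cdots+\tau_k\geq n-m)$, applies the induction hypothesis termwise to replace $\tau_1,\ldots,\tau_k$ by $\tilde{\tau}_1,\ldots,\tilde{\tau}_k$, and then applies Lemma \ref{lemma:geometric} once more to swap the last summand; the base case is Lemma \ref{lemma:geometric} itself. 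You instead invoke the standard fact that the usual stochastic order is closed under convolution of independent random variables, justified by a Strassen-type coupling ($\hat{X}_j\leq\hat{\sigma}_j$ almost surely, stacked independently across $j$; for integer-valued variables the quantile coupling with independent uniforms does this explicitly, so the appeal to Strassen is not even strictly necessary). The two arguments are mathematically equivalent — the paper's induction is precisely the elementary proof of the closure property you cite — so the difference is one of packaging: your route is shorter and makes the structural reason for the lemma transparent (a monotone coupling of the two renewal sequences), at the cost of appealing to an external theorem, while the paper's induction is self-contained and needs nothing beyond the convolution identity. Both proofs rest on the same independence facts, which you make more explicit than the paper does: the $\tau_j$ are i.i.d.\ by regeneration at the atom $\mathcal{C}_1$, and $\tau_h^i$ is independent of the subsequent inter-renewal times by the same regeneration argument — a fact the paper uses silently when it writes its convolution expansion for the mixed sum $\tau_h^i+\tau_1+\cdots+\tau_{k-1}$.
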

\begin{proof}
We will establish both facts by induction. Note that the statement holds for $k = 1$ directly by using Lemma \ref{lemma:geometric}. Assume now that for some $k$ and arbitrary $n$ it holds that
\begin{align}
& P(\tilde{\tau}_1 + \hdots + \tilde{\tau}_k \geq n) \geq \\
& \quad  P(\tilde{\tau}_h^i + \tau_1 + \hdots + \tau_{k-1} \geq n ) \nonumber\\
&P(\tilde{\tau}_1 + \hdots + \tilde{\tau}_k \geq n) \geq P(\tau_1 + \hdots + \tau_k \geq n). \label{eq:onlyone}
\end{align}
Now we will establish that the induction argument holds for \eqref{eq:onlyone}. The other quantity follows the exact argument so this is left out.

We may write the expansion
\begin{align}
& P(\tau_1 + \hdots + \tau_{k+1} \geq n ) = \\
& \sum_{m} P(\tau_{k+1} = m) P(\tau_1 + \hdots + \tau_k \geq n-m).
\end{align}
since $P(\tau_1 + \hdots + \tau_k \geq n-m) \leq P(\tilde{\tau}_1+\hdots+\tilde{\tau}_k \geq n-m),$ we have that $P(\tau_1 + \hdots + \tau_{k+1} \geq n) \leq P(\tilde{\tau}_{1} + \hdots + \tilde{\tau}_k + \tau_{k+1} \geq n).$ Further since $P(\tilde{\tau}_1 + \hdots + \tilde{\tau}_k + \tau_{k+1} \geq n) = \sum_{m} P(\tau_{k+1} \geq n-m)P(\tilde{\tau}_1 + \hdots + \tilde{\tau}_k = m) $ and $P(\tau_{k+1} \geq n-m) \leq P(\tilde{\tau}_{k+1} \geq n-m)$ from Lemma \ref{lemma:geometric} we conclude that $P(\tau_1 + \hdots + \tau_{k+1} \geq n) \leq P(\tilde{\tau}_1 + \hdots + \tilde{\tau}_{k+1} \geq n)$ for any $k \geq 1$ and $n \geq 1.$
\end{proof}

\begin{lemma}\label{lemma:delay} Let $\tilde{\tau}_1,\hdots,\tilde{\tau}_k$ denote $k$ $k_0-$delayed i.i.d. geometric distributions with probability of success $\epsilon.$ Further, let $\xi_1,\hdots, \xi_k$ denote $k$ i.i.d. geometric random variables with probability of success $\epsilon.$ Then, 
\begin{equation}
P(\tilde{\tau}_1 + \hdots + \tilde{\tau}_k \geq n) = P(\xi_1 + \hdots + \xi_k \geq n - k(k_0-1))
\end{equation}
\end{lemma}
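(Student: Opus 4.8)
The plan is to recognize that the $k_0$-delayed geometric law in \eqref{eq:delayeddist} is nothing but an ordinary geometric law rigidly translated by the deterministic constant $k_0-1$, after which the statement reduces to a change of variables in the threshold of the tail event. Concretely, I expect $\tilde{\tau}_i$ to be equal in distribution to $\xi_i + (k_0-1)$, so that summing $k$ independent copies shifts the threshold by $k(k_0-1)$.

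First I would compare the probability mass functions directly. Writing $\xi$ for an ordinary geometric variable with success probability $\epsilon$ supported on the positive integers, so that $P(\xi = m) = \epsilon(1-\epsilon)^{m-1}$ for $m \geq 1$, I would substitute $m = n-(k_0-1)$ and observe that $P(\xi = n-(k_0-1)) = \epsilon(1-\epsilon)^{n-k_0}$ for $n \geq k_0$, while the event is impossible for $n < k_0$. This matches \eqref{eq:delayeddist} term by term, establishing that $\tilde{\tau}_i$ and $\xi_i + (k_0-1)$ have the same distribution for each $i$.

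Next, since the $\tilde{\tau}_i$ are i.i.d. and the $\xi_i$ are i.i.d., and the displacement $k_0-1$ is deterministic and identical across the index $i$, the independent vectors $(\tilde{\tau}_1,\ldots,\tilde{\tau}_k)$ and $(\xi_1+(k_0-1),\ldots,\xi_k+(k_0-1))$ share a joint law; applying the measurable map that takes the sum then yields $\tilde{\tau}_1 + \cdots + \tilde{\tau}_k \overset{d}{=} (\xi_1 + \cdots + \xi_k) + k(k_0-1)$. Evaluating the resulting tail probability gives
\begin{equation*}
P(\tilde{\tau}_1 + \cdots + \tilde{\tau}_k \geq n) = P\bigl((\xi_1 + \cdots + \xi_k) + k(k_0-1) \geq n\bigr) = P(\xi_1 + \cdots + \xi_k \geq n - k(k_0-1)),
\end{equation*}
which is the claimed identity.

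This argument presents no genuine difficulty; the only point requiring care is the bookkeeping of the shift constant, which is where I expect the sole risk of error. Because the non-delayed geometric fixed by \eqref{eq:delayeddist} is supported on the positive integers, with minimal value $1$ attained with probability $\epsilon$, delaying the support to $\{k_0, k_0+1, \ldots\}$ corresponds to adding $k_0-1$ per summand rather than $k_0$; accumulating $k$ independent copies therefore moves the threshold by $k(k_0-1)$. Had the convention instead placed the geometric support on the nonnegative integers, the per-variable displacement would be $k_0$ and the threshold shift $k\,k_0$, so the essential step is simply to read off the correct displacement from the definition \eqref{eq:delayeddist}.
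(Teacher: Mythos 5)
Your proof is correct, and it takes a genuinely different (and more elementary) route than the paper. You observe that the $k_0$-delayed law \eqref{eq:delayeddist} is exactly the law of $\xi + (k_0-1)$ for an ordinary geometric $\xi$ supported on the positive integers; your bookkeeping is right, since $P(\xi = n-(k_0-1)) = \epsilon(1-\epsilon)^{n-k_0}$ precisely for $n \geq k_0$, and your convention for $\xi$ matches the one the paper uses later in the Chernoff step, where $\mathbb{E}[e^{t\xi}] = \epsilon e^t/(1-(1-\epsilon)e^t)$. From there, pushing the deterministic shift through a sum of $k$ i.i.d.\ copies immediately gives $\tilde{\tau}_1+\cdots+\tilde{\tau}_k \overset{d}{=} (\xi_1+\cdots+\xi_k) + k(k_0-1)$ and hence the tail identity. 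The paper instead reduces the claim to the pointwise identity $P(\tilde{\tau}_1+\cdots+\tilde{\tau}_k = n) = P(\xi_1+\cdots+\xi_k = n-k(k_0-1))$, sums over $n$, and verifies the pointwise identity by computing Z-transforms of the two $k$-fold convolutions: the transforms differ exactly by the factor $z^{-k(k_0-1)}$, which encodes the shift. The two arguments hinge on the same fact (delay equals rigid translation), but yours expresses it at the level of random variables rather than generating functions, which avoids the transform machinery entirely and makes the accumulated shift $k(k_0-1)$ transparent; the paper's transform computation is a mechanical algebraic check, but it buys nothing extra here (incidentally, its denominators should read $z-(1-\epsilon)$ rather than $z-\epsilon$ under the stated convention, a typo that your direct verification sidesteps).
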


\begin{proof}
We will first establish that the equality 
\begin{equation}
P(\tilde{\tau}_1 + \hdots + \tilde{\tau}_k = n) = P(\xi_1 + \hdots + \xi_k = n - k(k_0-1))
\end{equation}
holds. Then, in order to claim that the lemma holds true, all we have to do is sum over $n.$ Since we are dealing with the sum of independent random variables, the distribution of the sum will be the convolution of distributions. We will be using the Z-transforms as then we can conveniently deal with products instead. Let $U_{\tilde{\tau}}(z)$ denote the $Z-$transform of $P(\tilde{\tau} = n),$ i.e.,
\begin{equation}
U_{\tilde{\tau}}(z) = \epsilon \frac{z^{-(k_0-1)}}{z-\epsilon}.
\end{equation} 
Analogously,
\begin{equation}
U_{\xi}(z) = \epsilon \frac{1}{z-\epsilon}.
\end{equation}
Then, the Z-transform of each of the $k-$fold convolutions can be written as
\begin{align}
&U_{\tilde{\tau}}^k(z) = \epsilon^k \frac{z^{-k(k_0 -1)}}{(z-\epsilon)^k} \\
& U_{\xi}^k(z) = \frac{\epsilon^k}{(z-\epsilon)^k}.
\end{align}
It then follows that $P(\tilde{\tau}_1 + \hdots + \tilde{\tau}_k = n) = P(\xi_1 + \hdots + \xi_k = n-k(k_0-1)).$
\end{proof}
Let us define $N_{\mathcal{C}_1}(n-j,i)$ to be the number of times the process visits the set $\mathcal{C}$ in time $n-j$ after time $i.$ The quantity is directly dependent on the inter-arrival times $\tau_h^i,\tau_1,\hdots.$
We now introduce a final supporting lemma regarding the quantity $N_{\mathcal{C}_1}(n-j,i)$
\begin{lemma}\label{lemma:numbervisits}
The number of renewals $N_{\mathcal{C}_1}(n-j,i)$ associated to the inter-renewal times $\tau_h^i,\tau_1,\hdots$ after time $i$ in time $n-j$ and the number of renewals $\tilde{N}(n-j)$ associated by the inter-arrival times $\tilde{\tau}_1,\tilde{\tau}_2,\hdots$ fulfil the following relationship
\begin{equation}
P(N_{\mathcal{C}_1}(n-j,i) \leq k) \leq P(\tilde{N}(n-j)\leq k),\,\forall k.
\end{equation}
\end{lemma}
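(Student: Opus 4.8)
The plan is to use the elementary duality of renewal theory, which converts a statement about the counting variable into one about the partial sums of inter-arrival times, and then to read off the desired ordering directly from Lemma \ref{lemma:cdominate}, which already supplies exactly the required stochastic dominance of those sums.

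First I would record the renewal inversion identity in discrete time. Since the first visit to $\mathcal{C}_1$ after time $i$ occurs after $\tau_h^i$ steps and subsequent returns are spaced by $\tau_1,\tau_2,\ldots$, the $m$-th visit occurs at relative time $\tau_h^i + \tau_1 + \cdots + \tau_{m-1}$. Hence $N_{\mathcal{C}_1}(n-j,i) \geq m$ if and only if $\tau_h^i + \tau_1 + \cdots + \tau_{m-1} \leq n-j$. Complementing at $m=k+1$ and using $\{X > n-j\} = \{X \geq n-j+1\}$ for integer-valued $X$ gives
\begin{equation}
P(N_{\mathcal{C}_1}(n-j,i) \leq k) = P(\tau_h^i + \tau_1 + \cdots + \tau_{k} \geq n-j+1).
\end{equation}
The same inversion applied to the renewal process with inter-arrival times $\tilde{\tau}_1,\tilde{\tau}_2,\ldots$, whose $m$-th arrival is at $\tilde{\tau}_1 + \cdots + \tilde{\tau}_m$, yields
\begin{equation}
P(\tilde{N}(n-j) \leq k) = P(\tilde{\tau}_1 + \cdots + \tilde{\tau}_{k+1} \geq n-j+1).
\end{equation}

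The crux is then a single application of the first inequality in Lemma \ref{lemma:cdominate}, with its index $k$ replaced by $k+1$ and its argument taken to be $n-j+1$, which states precisely that
\begin{equation}
P(\tau_h^i + \tau_1 + \cdots + \tau_{k} \geq n-j+1) \leq P(\tilde{\tau}_1 + \cdots + \tilde{\tau}_{k+1} \geq n-j+1).
\end{equation}
Chaining the three displays gives $P(N_{\mathcal{C}_1}(n-j,i) \leq k) \leq P(\tilde{N}(n-j) \leq k)$ for every $k$, which is the assertion.

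The work here is bookkeeping rather than conceptual, and this is also where the only real obstacle lies: one must verify that the delayed first-passage term $\tau_h^i$ is counted as one of the $k+1$ summands, so that the two sides of Lemma \ref{lemma:cdominate} carry the same number of terms, and one must handle the off-by-one between the strict event $\{\,\cdot > n-j\,\}$ and the non-strict event $\{\,\cdot \geq n-j+1\,\}$ identically in both inversions. Once the indices are aligned no further probabilistic argument is needed, since the dominance of the sums is inherited from Lemma \ref{lemma:cdominate}.
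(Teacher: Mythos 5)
Your proposal is correct and follows essentially the same route as the paper: both convert the counting statement into a tail statement about the partial sums via the discrete renewal inversion identity, and then invoke the first inequality of Lemma \ref{lemma:cdominate} to transfer the stochastic dominance. Your version is in fact slightly more careful with the off-by-one bookkeeping (using $k+1$ summands against the threshold $n-j+1$) than the paper's statement of the identities, but the underlying argument is identical.
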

\begin{proof}
Since
\begin{align}
P(N_{\mathcal{C}_1}(n-j,i)\leq k) = P(\tau_h^i + \tau_1 + \tau_{k-1} \geq n-j) \\
P(\tilde{N}(n,j) \leq k) = P(\tilde{\tau}_1 + \hdots + \tilde{\tau}_k \geq n-j)
\end{align}
from lemma \ref{lemma:cdominate} it follows that
\begin{equation}
P(N_{\mathcal{C}_1}(n-j,i) \leq k) \leq P(\tilde{N}(n-j)\leq k),\,\forall k.
\end{equation} 
\end{proof}
% use section* for acknowledgment
We are now have all the supporting Lemmas required to establish Theorem \ref{theorem:contraction}. Note that for any $i \leq n$ we have
\begin{equation}
\mathbb{E}\left[ \Pi_{i=j}^n q(\boldsymbol{\Theta}_i)\right] \leq \mathbb{E}\left[ q_{\mathcal{C}}^{N_{\mathcal{C}_1}(n-j,i)}\right],
\end{equation}
where $q_{\mathcal{C}}$ denotes the worst contraction parameter within the set $\mathcal{C}$. Then, from Lemma \ref{lemma:numbervisits} it follows that
\begin{equation}
P(q_{\mathcal{C}}^{N_{\mathcal{C}_1}(n-j,i)} \geq q_{\mathcal{C}}^k) \leq P(q_{\mathcal{C}}^{\tilde{N}(n-j)} \geq q_{\mathcal{C}}^k). 
\end{equation}
The above implies that
\begin{equation}
\mathbb{E}[q_{\mathcal{C}_1}^{N_{\mathcal{C}}(n-j)}] \leq \mathbb{E}[q_{\mathcal{C}}^{\tilde{N}(n-j)}].
\end{equation}
Consequently, we will now work with the quantity $\mathbb{E}[q_{\mathcal{C}}^{\tilde{N}(n-j)}].$
The expectation can be explicitly written as 
\begin{equation}
\mathbb{E}[q_{\mathcal{C}}^{\tilde{N}(n-j)}] = \sum_{k=0}^{\lfloor\frac{n-j}{k_0} \rfloor} q_{\mathcal{C}}^k P(\tilde{N}(n-j)=k),
\end{equation}
where the last term of the sum corresponds to $\lfloor \frac{n-j}{k_0} \rfloor.$ This is due to the fact that in time $n-j$ a maximum of $\lfloor \frac{n-j}{k_0}\rfloor$ visits to $\mathcal{C}$ are possible due to the variables $\tilde{\tau}_i$ being $k_0$ delayed. Further, using Lemma \ref{lemma:delay} we have that
\begin{align}
& \mathbb{E}[q_{\mathcal{C}}^{\tilde{N}(n-j)}] = \\
& \sum_{k=0}^{\lfloor \frac{n-j}{k_0}\rfloor}q_{\mathcal{C}}^kP(\xi_i+\hdots+\xi_k \geq n-j-k(k_0-1)). \nonumber
\end{align}
Using Chernoff's bound we have that
\begin{align}
P(\xi_1 + \hdots + \xi_k \geq n-j-k(k_0-1)) \leq \\
\mathbb{E}[e^{t\xi}]^k e^{-t(n-j-k(k_0-1))}
\end{align} for any $t > 0.$ Further, since $\xi$ is a geometric random variable with success probability $\epsilon >0$ we have
\begin{equation}
\mathbb{E}[e^{t\xi}]^k = \left(\frac{\epsilon e^t}{1-(1-\epsilon)e^t} \right)^k.
\end{equation}
For notational simplicity let $m(t) \triangleq \mathbb{E}[e^{t\xi}].$ Note that $m(t)$ can be made arbitrarily close to 1 by appropriate selection of $t.$ Then,
\begin{align}
& \mathbb{E}[q_{\mathcal{C}}^{\tilde{N}(n-j)}] \leq \\
& e^{-t(n-j)}\sum_{k=0}^{\lfloor \frac{n-j}{k_0} \rfloor}
\left(q_{\mathcal{C}}m(t)e^{t(k_0-1)} \right)^k = \nonumber\\
& e^{-t(n-j)}\frac{1-\left(q_{\mathcal{C}}m(t)e^{t(k_0-1)} \right)^{\lfloor \frac{n-j}{k_0}\rfloor}+1}{1-q_{\mathcal{C}}m(t)e^{t(k_0-1)}}.
\end{align}
If $q_{\mathcal{C}}m(t)e^{t(k_0-1)} < 1$ the exponential decay in $n-j$ can be seen immediately. Writing out the condition by using the definition of $m(t)$ we have
\begin{equation}
\label{eq:condition}
q_{\mathcal{C}}\left(\frac{\epsilon e^{tk_0}}{1-(1-\epsilon)e^t} \right) < 1.
\end{equation}
Note that $m(t)e^{t(k_0-1)} > 1$ and equality holds if $t = 0.$ Hence, since \eqref{eq:condition} is monotonically increasing and continuous for $t \in [0,\text{ln}(1/(1-\epsilon))],$ $t$ can be chosen arbitrarily close to 0, yielding a value of  $m(t)e^{t(k_0-1)}$ that can be made arbitrarily close to 1. Since $q_{\mathcal{C}} < 1,$ there exists a sufficiently small value of $t$ that makes \eqref{eq:condition} true.
Therefore, there exists $C < \infty$ and $\gamma < 1$ such that
\begin{equation}
\mathbb{E}[\Pi_{j=i}^n q(\boldsymbol{\Theta}_i)] \leq C \gamma^{n-j}.
\end{equation}
\ifCLASSOPTIONcaptionsoff
  \newpage
\fi

\end{document}